\documentclass[cupthm,crop,info]{CUP-JNL-ETS}%

\usepackage{graphicx}
\usepackage{multicol,multirow}
\usepackage{amsmath,amssymb,amsfonts}
\usepackage{mathrsfs}
\usepackage{rotating}
\usepackage{appendix}
\usepackage[numbers]{natbib}
\usepackage{amsthm}
\usepackage{lipsum}

\theoremstyle{cupplain}
\newtheorem{theorem}{Theorem}[section]
\newtheorem{lemma}[theorem]{Lemma}
\newtheorem{corollary}[theorem]{Corollary}
\theoremstyle{cupdefinition}
\newtheorem{definition}{Definition}[section]
\theoremstyle{cupremark}
\newtheorem{remark}[theorem]{Remark}
\newtheorem{example}[theorem]{Example}
\theoremstyle{cupproof}
\newtheorem{proof}{Proof}

\numberwithin{equation}{section}

\jyear{2020}
\jdoi{10.1017/etds.2020.xx}

\begin{document}

\newcommand{\PP}{{\mathbb P}}

\newcommand{\R}{{\mathbb R}}
\newcommand{\F}{{\mathbb F}}
\newcommand{\N}{{\mathbb N}}
\newcommand{\Z}{{\mathbb Z}}
\newcommand{\E}{{\mathbb E}}
\newcommand{\T}{{\mathbb T}}
\newcommand{\D}{{\cal D}}
\newcommand{\A}{{\cal A}}
\newcommand{\pder}[2]{\frac{\partial #1}{\partial #2}}

\begin{Frontmatter}

\title{A Kakutani-Rokhlin decomposition for conditionally ergodic process  in the measure-free setting of vector lattices}

\author{\gname{Youssef} \sname{Azouzi}}
\address{\orgdiv{LATAO Laboratory, Faculty of Mathematical, Physical and Natural Sciences of Tunis}, \orgname{Tunis-El Manar University}, \orgaddress{\city{Tunis}, \postcode{2092 El Manar}, \state{State},  \country{Tunisia}}\\ 
(\email{josefazouzi@gmail.com},\email{youssef.azouzi@ipest.rnu.tn})}
\author{\gname{Marwa} \sname{Masmoudi}}
\address{\orgdiv{LATAO Laboratory, Faculty of Mathematical, Physical and Natural Sciences of Tunis}, \orgname{Tunis-El Manar University}, \orgaddress{\city{Tunis}, \postcode{2092 El Manar}, \state{State},  \country{Tunisia}}\\ 
(\email{marwa_masmoudi@hotmail.com})}
\author{\gname{Bruce} \sname{Watson}}
\address{\orgdiv{School of Mathematics, CoE-MaSS \& NITheCS}, \orgname{University of the Witwatersrand}, \orgaddress{\city{Johannesburg}, \postcode{2050}, \state{Gauteng},  \country{South Africa}}\\
(\email{b.alastair.watson@gmail.com},\email{bruce.watson@wits.ac.za})\\
ORCID: 0000-0003-2403-1752
}

\Received{\textup{00} November \textup{2019}}
\Accepted[ and accepted in revised form]{\textup{00} February \textup{2020}}

\maketitle

\authormark{Y. Azouzi, M. Masmoudi and B.A. Watson}
\titlemark{Kakutani-Rokhlin decomposition in vector lattices}

\begin{abstract}
Recently the Kac formula for the conditional expectation of the first recurrence time of a conditionally ergodic conditional expectation preserving system was established in the measure free setting of vector lattices (Riesz spaces).  We now give a formulation of the Kakutani-Rokhlin decomposition for conditionally ergodic systems in terms of components of weak order units in a vector lattice.  
In addition, we prove that every aperiodic conditional expectation preserving system can be approximated by a periodic system.
\end{abstract}

\keywords{Kakutani-Rokhin decomposition, vector lattices, Riesz spaces,  Rokhlin towers, conditional ergodicity}

\keywords[2020 Mathematics Subject Classification]{\codes[Primary]{47B60, 37A30}\codes[Secondary]{47A35, 60A10}}

\end{Frontmatter}

\section{Introduction} \label{s: introduction}

The ergodic theorems of Birkhoff, Hopf,  von Neumann, Wiener and Yoshida were generalized to the measure free setting of vector lattices (Riesz spaces) in 2007,  see \cite{kuo2007ergodic}.
The Poincaré recurrence theorem and Kac's formula in vector lattices were published in 2023, see \cite{azouzi2022kac}.
In none of the above were the concept of Rokhlin Towers/Kakutani-Rokhlin decomposition used and up to the present the concept of  Rokhlin Towers/Kakutani-Rokhlin decomposition had not been generalized to the vector lattice setting.
In this paper, we present a Kakutani-Rokhlin decomposition for dynamical systems defined by
iterates of a Riesz homomorphism acting on a vector lattice (Riesz space).
This takes the work of \cite{ eisner2015operator,   petersen1989ergodic, rokhlin1967lectures} and others, out of the realm of
metric, topological and measure spaces.
We note here the contrast between the development in vector lattices and that of \cite{bochi}, where the ergodic theorems were derived as a consequence of the Rokhlin Towers/Kakutani-Rokhlin decomposition.  
Our generalization to vector lattices is with respect to a discrete-time process, but generalizes the underlying space of the process.  Other generalizations, see for example \cite{lindenstrauss}, 
have kept the underlying space as a measure space but have extended the time-index space to amenable groups.
We refer the reader to  \cite{kra2018commentary, weiss1989work} for  some applications and other generalizations of the Rokhlin Towers/Kakutani-Rokhlin decomposition.

The extension given here, when applied back to probabilistic systems gives the existence 
of a Kakutani-Rokhlin decomposition (also known in this context as Rokhlin towers) for conditionally ergodic processes.  A consequence of this is that every conditional expectation preserving system that is aperiodic admits a Kakutani-Rokhlin decomposition.  Examples are given in each stage of our development to show that the given result cannot be improved without additional assumptions.

 In the probability setting, the Kakutani-Rokhlin lemma gives that, for each $n\in \N$ and $\epsilon >0$ each a.e. bijective ergodic aperiodic measure preserving transformation, $\tau$,  on the probability space $(\Omega,{\cal A},\mu)$, there is a set $B\in{\cal A}$ so that 
 $\tau^{-j}(B), j=0,\dots,n-1,$ are disjoint and
  $\displaystyle{\mu\left(\Omega\setminus \cup_{j=1}^n \tau^{1-j}(B)\right)<\epsilon}$.  We refer the reader to \cite[Section 3.3]{rokhlin1967lectures} for the specific result and to \cite{kornfeld} for a survey of research around such decompositions.
The generalization of the Kakutani-Rokhlin lemma to the topology-free, metric-free, measure-free setting of Riesz spaces (vector lattices) is given in Theorem \ref{theo ergodic}.  

We begin by giving an $\epsilon$-free version of the Kakutani-Rokhlin lemma in the Riesz space setting, Theorem \ref{kakutani1},  see \cite{weiss1989work} for the measure space version.    
This version applies to conditionally ergodic systems on Riesz spaces (in fact each conditional expectation preserving system of a Riesz space gives rise to a conditionally ergodic system) and does not require aperiodicity.  This then forms the foundation of Theorem \ref{theo ergodic}, where aperiodicity is essential.

The remaining foundational aspects of ergodic theory in Riesz spaces needed for the current work can be found in  \cite{amor2023characterisation,homann2020ergodicity} and \cite{kuo2005conditional} for the general theory of conditional expectation operators in Riesz spaces.
It should be noted that many other stochastic processes have been studied in the Riesz space (vector lattice) framework, for example discrete \cite{klw-disc} and continuous \cite{gl-cts-1, gl-cts-2, stoica} time martingale processes as well as mixing processes \cite{amor2023characterisation}.

In Section 2, we recall the basics of conditional expectation preserving systems, ergodic processes,  Poincaré's recurrence theorem and Kac's formula in Riesz spaces.
In Section 3, we  give a Riesz space version of the $\epsilon$-free Kakutani-Rokhlin type decompositions.
In Section 4 we introduce aperiodicity in Riesz spaces and use this concept together with the Kac formula and  the $\epsilon$-free Kakutani-Rokhlin type decomposition to give an $\epsilon$-bound version of the Kakutani-Rokhlin decomposition in Riesz spaces.
We conclude in Section 5 with an application of the Kakutani-Rokhlin Theorem for aperiodic processes to show that every conditionally ergodic process which can be decomposed into aperiodic processes can be approximated by a periodic processes.

\section{Preliminaries} \label{preliminary section}

For Riesz space theory and associated terminology,  we refer readers to \cite{zaanen2012introduction}.  The background material on ergodic theory can be found in \cite{eisner2015operator,petersen1989ergodic}.  Our current work builds on \cite{azouzi2022kac},  in which many of the foundational results can be found.
The concept of a conditional expectation operator on a Riesz space is fundamental to the material presented here and hence we quote its definition from \cite{kuo2005conditional}.

\begin{definition} 
Let $E$ be an Archimedean Riesz space with weak order unit. A positive order continuous projection $T \colon E \rightarrow E$ which maps weak order units to weak order units is called a conditional expectation if the range of $T$, $R(T)$,  is a Dedekind complete Riesz subspace of $E$.
\end{definition}

Throughout this work we will assume that the conditional expectation operator $T$ is strictly positive, that is, if $f\in E_+$ and $Tf=0$ then $f=0$.

The Riesz space analogue of a measure preserving system was introduced in \cite{homann2021koopman} as a conditional expectation preserving system, see below. The concept was first used and studied in \cite{kuo2007ergodic}, but not given a name there.  

\begin{definition} \label{system definition}
The 4-tuple, $(E,T,S,e)$, is called a conditional expectation preserving system (CEPS) if $E$ is a Dedekind complete Riesz space, $e$ is a weak order unit of $E$, $T$ is a conditional expectation operator on $E$ with $Te=e$, $S$ is an order continuous Riesz homomorphism on $E$ with $Se=e$ and $TSf=Tf$, for all $f \in E$.
\end{definition}

\begin{remark}
If $(E,T,S,e)$ is a conditional expectation preserving system,
then $$TS^jf=Tf$$ for all $j \in \N_0$ and $f \in E$.

We also note that if $S$ is a Riesz homomorphism with $TS=T$ where $T$ is a strictly positive conditional expectation operator on a Dedekind complete Riesz space $E$, then $S$ is order continuous, see \cite{KSvG} for a more general study of  order continuity Riesz homomorphism. To see this, we let $f_\alpha$ be a downwards directed net in $E^+$ with $f_\alpha \downarrow 0$, then $Sf_\alpha$ is downwards directed with $Sf_\alpha\downarrow h$ for some $h\in E^+$. However $TS=T$ so, as $T$ is order continuous, 
$0\leftarrow Tf_\alpha=T(Sf_\alpha)\to Th$. The strict positivity of $T$ now gives $h=0$. Hence $Sf_\alpha\to 0$, making it order continuous.  
\end{remark}

We recall, from \cite{kuo2005conditional}, the concept of $T$-universal completeness, the $T$-universal completion and, from \cite{kuo2017mixing}, the $R(T)$-module structure of $L^1(T)$,  see also \cite{AT}.

\begin{definition}
If $T$ is a strictly positive conditional expectation operator on a Dedekind complete Riesz 
space, $E$ with weak order unit $e=Te$, then the  natural domain of $T$ is 
$$\mbox{dom}(T):=\{f\in E^u_+|\exists  \mbox{ net } f_\alpha\uparrow f \mbox{ in } E^u,
(f_\alpha)\subset E_+, Tf_\alpha \mbox{ bounded in } E^u\},$$
where $E^u$ denotes the universal completion of $E$.
We define
$$L^1(T):=\mbox{dom}(T)-\mbox{dom}(T)=\{f-g|f,g\in\mbox{dom}(T)\}$$
and say that $E$ is $T$-universally complete if $E=L^1(T)$. 
\end{definition}

From the above definition, $E$ is $T$-universally complete if, and only
 if, for each upwards directed net $(f_{\alpha})_{\alpha \in \Lambda}$ in $E_+$ such that $(Tf_{\alpha})_{\alpha \in \Lambda}$ is order bounded in $E^{u}$, we have that $(f_{\alpha})_{\alpha \in \Lambda}$ is order convergent in $E$. 
 
 $E^u$ has an $f$-algebra structure which can be chosen so that $e$ is the multiplicative identity.
For $T$ acting on $E=L^1(T)$,  $R(T)$ is a universally complete and thus an $f$-algebra, and, further, $L^1(T)$ is an $R(T)$-module.
From \cite[Theorem 5.3]{kuo2005conditional}, $T$ is an averaging operator,  which means that if $f\in R(T)$ 
and $g\in E$ then $T(fg)=fT(g)$.

From \cite{kuo2007ergodic},  for each $f\in L^1(T)$, the Ces\`aro mean
\begin{equation}\label{cesaro}
L_Sf:=\lim_{n\to\infty}\frac{1}{n}\sum_{k=0}^{n-1}S^kf,
\end{equation}
converges in order, in $L^1(T)$, for each Riesz homomorphism $S$ on $E=L^1(T)$ with $TS=T$ and $Se=e$.
We denote the invariant set of the Riesz homomorphism, $S$, by 
$${\cal I}_S:=\{f\in L^1(T) : Sf=f\}.$$

We say that $p\in E_+$ is a component of $q\in E_+$ if $p\le q$ and $(q-p)\wedge p=0$.
We denote the set of components of $q$ by $C_q$.

The conditional expectation preserving system $(E=L^1(T),T,S,e)$ is said to be conditionally ergodic if $L_S=T$ which is equivalent to ${\cal I}_S=R(T)$, see \cite{homann2020ergodicity},  in which case $ST=T$ and hence $S^jTf=Tf$ for all $j \in \N_0$ and $f \in E$.

\begin{lemma}\label{baw-12-com}
If $(E,T,S,e)$ is a conditional expectation preserving system and $T$ is strictly positive then $Sg=g$ for all $g\in R(T)$. In the case of $E$ being an $R(T)$ module this invariance gives that $S(gf)=gSf$ for all $g\in R(T)$ and $f\in E$.
\end{lemma}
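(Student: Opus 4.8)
The plan is to show that the ``conditional variance'' of $Sg$ about $g$ vanishes, namely that $T\big((g-Sg)^2\big)=0$, and then to invoke strict positivity of $T$ together with semiprimeness of the $f$-algebra $E^u$ to conclude $g-Sg=0$. No reduction to positive $g$ is needed, since the computation is purely algebraic. First I would record the ingredients. Since $R(T)$ is a universally complete $f$-subalgebra of $E^u$, for $g\in R(T)$ we have $g^2\in R(T)$ and hence $Tg=g$ and $T(g^2)=g^2$. Since $S$ is a Riesz homomorphism with $Se=e$ on the $f$-algebra $E^u$ whose multiplicative identity is $e$, $S$ is multiplicative (an $f$-algebra homomorphism); in particular $(Sg)^2=S(g^2)$. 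Because $g^2\in R(T)\subseteq E$ and $S\colon E\to E$, the element $(Sg)^2=S(g^2)$ again lies in $E=L^1(T)$, and likewise $g\cdot Sg\in L^1(T)$ by the $R(T)$-module structure; this guarantees that every term below lies in the natural domain of $T$.

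Next I would expand and evaluate term by term:
$$T\big((g-Sg)^2\big)=T(g^2)-2\,T(g\cdot Sg)+T\big((Sg)^2\big).$$
The first term is $T(g^2)=g^2$. For the cross term, since $g\in R(T)$ and $T$ is averaging, $T(g\cdot Sg)=g\,T(Sg)=g\,TSg=g\,Tg=g\cdot g=g^2$. For the last term, multiplicativity gives $T\big((Sg)^2\big)=T(S(g^2))=TS(g^2)=T(g^2)=g^2$. Hence $T\big((g-Sg)^2\big)=g^2-2g^2+g^2=0$. Now $(g-Sg)^2\ge 0$ and $T\big((g-Sg)^2\big)=0$, so strict positivity of $T$ forces $(g-Sg)^2=0$. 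As $E^u$ carries an $f$-algebra structure whose multiplicative identity $e$ is a weak order unit, it is semiprime and has no nonzero nilpotents, whence $g-Sg=0$, that is $Sg=g$. For the second assertion, when $E$ is an $R(T)$-module the multiplicativity of $S$ yields $S(gf)=Sg\cdot Sf=g\,Sf$ for $g\in R(T)$ and $f\in E$, using $Sg=g$ from the first part.

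The step I expect to be the main obstacle is the justification that $S$ acts multiplicatively, together with the bookkeeping that the products $g\cdot Sg$ and $(Sg)^2$ genuinely lie in $L^1(T)$ so that $T$ may be applied termwise. If one prefers to avoid invoking multiplicativity of $S$ directly, the same conclusion can be reached by first treating a component $p$ of $e$ in $R(T)$: there $Sp$ is again a component of $Se=e$ (as $S$ is a Riesz homomorphism), so $(p-Sp)^+=p\wedge(e-Sp)=p-p\,Sp$ and $(p-Sp)^-=Sp\wedge(e-p)=Sp-p\,Sp$ are products of components to which the averaging identity applies directly. Using $T(p\,Sp)=p\,TSp=p\,Tp=p^2=p$, both expressions have $T$-image zero, so strict positivity gives $p\wedge(e-Sp)=0=Sp\wedge(e-p)$ and hence $Sp=p$. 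One then extends to all of $R(T)$ by linearity and order continuity of $S$, since the components of $e$ generate $R(T)$ in order.
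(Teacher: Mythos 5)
Your main route (the ``conditional variance'' argument) has a genuine gap: the first claim of the lemma is stated for an arbitrary conditional expectation preserving system, where $E$ is only assumed Dedekind complete --- it is \emph{not} assumed that $E=L^1(T)$, that $R(T)$ is universally complete, or that $E$ carries an $R(T)$-module structure (that hypothesis enters only in the second claim). Consequently $g^2$ need not exist in $E$, nor even in the natural domain of $T$ inside $E^u$: take $E=L^1(\Omega,{\cal A},\mu)$, $T=\E[\,\cdot\,|\Sigma]$, and a $\Sigma$-measurable $g\in L^1\setminus L^2$; then $g^2$, $g\cdot Sg$ and $(Sg)^2$ are not in $E$ and $T$ cannot be applied to them, so the expansion of $T\big((g-Sg)^2\big)$ into three terms is an $\infty-\infty$ computation with no meaning. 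Your opening sentence ``since $R(T)$ is a universally complete $f$-subalgebra\dots $g^2\in R(T)$'' and the later ``$E=L^1(T)$'' silently import exactly the hypotheses the lemma does not grant. A second problem is the appeal to multiplicativity of $S$: the theorem that a unital lattice homomorphism on an $f$-algebra is multiplicative applies to maps defined on an $f$-algebra, whereas $S$ is only defined on $E$ (which, even when $E=L^1(T)$, is not closed under products); extending $S$ to $E^u$ and proving multiplicativity there is additional work, and in fact the identity $S(gf)=g\,Sf$ you would extract from it is precisely the second claim of the lemma, so this part of the route is close to circular.

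Your fallback paragraph, by contrast, is a correct proof and is essentially the paper's own argument: reduce to $g=p\in C_e\cap R(T)$ using the order density of linear combinations of components of $e$ together with the order continuity of $S$; then every product that occurs is a product of components, i.e.\ an infimum (equivalently a band projection applied to $e$), so everything stays inside $E$ for an arbitrary CEPS, and the averaging property in its band-projection form gives $T(p\,Sp)=p\,T(Sp)=p\,Tp=p$. The only cosmetic difference from the paper is the ending: the paper uses the band projection comparison $P_{TSp}\ge P_{Sp}$ to conclude $Sp\le p$ and then applies strict positivity once to $T(p-Sp)=0$, while you split $p-Sp$ into $(p-Sp)^+=p\wedge(e-Sp)$ and $(p-Sp)^-=Sp\wedge(e-p)$ and kill each part separately; both are valid, and yours avoids citing the projection-comparison lemma. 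To finish the second claim you should argue on components in the same way the paper does ($fg=f\wedge g$ for $f\in C_e$, and $S$ preserves infima, so $S(gf)=Sg\wedge Sf=g\wedge Sf=g\,Sf$, then extend by order continuity), rather than lean on the unproved multiplicativity of $S$. In short: discard the variance computation, promote the fallback to the proof, and the argument is complete.
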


\begin{proof}
Due to the order continuity of $S$ and the order density of the linear combinations of components of $e$ in $E$, it suffices to prove the result for $g\in C_e\cap R(T)$ and $f\in C_e$. 

For $g\in C_e\cap R(T)$ we have that 
$$g=Tg=TSg.$$
The averaging property of conditional expectations operators in terms of band projections  gives 
that $P_{TSg}\ge P_{Sg}$ where these are respectively the band projections generated by 
$TSg$ and $Sg$,  see \cite[Corollary 2.3]{klw-f} and \cite[Lemma 2.3]{klw-convergence}.   
Here $Sg$ is a component of $e$ so $P_{Sg}e=Sg$.
Further as $g=TSg$ which is a component of $e$ we have $P_{TSg}e=g$. 
Thus $g\ge Sg$.  As $T$ is strictly positive and $S$ is a Riesz homomorphism by \cite[Note 2.3]{azouzi2022kac} we have $Sg=g$.

For the second result, if $f\in C_e$ then $fg=f\wedge g$ so
$$S(gf)=S(g)\wedge S(f)=g\wedge S(f) =gSf$$
since $Sf$ is also a component of $e$.
\end{proof}

In  \cite[Lemma 3.1]{azouzi2022kac} an equivalent formulation for the definition of recurrence in \cite[Definition 1.4]{azouzi2022kac} was proved. For convenience here we will take this equivalent statement as a our definition of recurrence below.

\begin{definition}[Recurrence] \label{recurrence definition}
Let $(E,T,S,e)$ be a conditional expectation preserving system with $S$ bijective, then  $p\in C_q$ is recurrent with respect to $q\in C_ e$ if $$p\le\bigvee_{n\in\N} S^{-n} q.$$
\end{definition}

The following Riesz space generalization of the Poincaré recurrence theorem was proved in \cite[Theorem 3.2]{azouzi2022kac}.

For brevity of notation, we define the supremum over an empty family of components of $e$ to be zero, i.e.
$$\bigvee\limits_{j=1}^{0}p_j:=0,$$
for $(p_j)\subset C_e$.

\begin{theorem}[Poincaré]\label{thm-poincare}
Let $(E,T,S,e)$ be a conditional expectation preserving system with $T$ strictly positive and  $S$ surjective, then each  $p\in C_q$ is recurrent with respect to $q$ for each $q\in C_e$. 	
\end{theorem}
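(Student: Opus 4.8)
The plan is to reduce the claim to the single inequality $q\le\bigvee_{n\in\N}S^{-n}q$ and then prove it by the Riesz-space analogue of the classical ``a wandering set has measure zero'' argument. First I would record that $S$ is in fact bijective: since $T$ is strictly positive and $S$ is a Riesz homomorphism with $TS=T$, the relation $Sf=0$ gives $S|f|=|Sf|=0$, whence $T|f|=TS|f|=0$ and thus $f=0$, so $S$ is injective; together with the assumed surjectivity this makes $S^{-1}$ a well-defined Riesz homomorphism with $S^{-1}e=e$. Consequently each $S^{-n}q$ is a component of $e$, and, by Dedekind completeness (the components of $e$ form a complete Boolean algebra), so is $Q:=\bigvee_{n\in\N}S^{-n}q$. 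Because every $p\in C_q$ satisfies $p\le q$, it suffices to show $q\le Q$; the whole theorem then follows at once.

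Next I would isolate the ``never-returning'' part of $q$. Set
$$r:=q-q\wedge Q,$$
which, $q$ and $Q$ both being components of $e$, is again a component of $e$ lying below $q$ and disjoint from $Q$; in particular $r\wedge S^{-n}q=0$ for every $n\in\N$. Applying the Riesz homomorphism $S^n$ to this identity and using $S^n(S^{-n}q)=q$ gives $S^nr\wedge q=0$, and since $r\le q$ this yields $S^nr\wedge r=0$ for all $n\ge1$. The homomorphism property $S^i(a\wedge b)=S^ia\wedge S^ib$ then propagates this to pairwise disjointness of the whole orbit: $S^ir\wedge S^jr=S^i(r\wedge S^{\,j-i}r)=0$ for $0\le i<j$.

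The conservativity (finite-invariant-measure) step is where the hypotheses really bite. For each $m$ the partial supremum $u_m:=\bigvee_{n=0}^{m}S^nr$ equals the sum $\sum_{n=0}^{m}S^nr$ by disjointness and satisfies $u_m\le e$. Applying $T$ and using $TS^nr=Tr$ (from $TS=T$) gives
$$(m+1)\,Tr=Tu_m\le Te=e,$$
so $Tr\le\frac{1}{m+1}e$ for every $m$. The Archimedean property forces $Tr=0$, and strict positivity of $T$ then gives $r=0$, i.e. $q=q\wedge Q\le Q$. Thus $p\le q\le\bigvee_{n\in\N}S^{-n}q$ for every $p\in C_q$, which is exactly recurrence.

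I expect the main obstacle to be the conservativity step rather than the combinatorics of disjointness: one must convert the classical statement ``infinitely many disjoint sets of equal positive measure cannot fit into a finite measure space'' into the order-theoretic estimate $(m+1)Tr\le e$ and then extract $Tr=0$ from the Archimedean property. A secondary point requiring care is the bookkeeping that $Q$ and $q\wedge Q$ are genuine components of $e$, so that $r$ is a component disjoint from $Q$; this rests on the Boolean-algebra structure of $C_e$ in the Dedekind complete space $E$ and on $S$ (hence $S^{-1}$) preserving components of $e$.
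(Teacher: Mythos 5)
Your proof is correct. Note that the paper itself gives no proof of this theorem---it quotes it from \cite[Theorem 3.2]{azouzi2022kac}---but your argument (reduce to $q\le\bigvee_{n\in\N}S^{-n}q$, split off the never-returning component $r=q-q\wedge Q$, use bijectivity of $S$ and the homomorphism property to get pairwise disjointness of the orbit $S^n r$, then force $Tr=0$ from $(m+1)Tr\le e$, the Archimedean property and strict positivity) is exactly the classical wandering-set argument as adapted to Riesz spaces in that source, so it is essentially the same approach.
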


For $k \in \N$, let $$q(p,k):=p \wedge (S^{-k}p) \wedge (e-\bigvee\limits_{j=1}^{k-1}S^{-j}p).$$

Here $q(p,k)$ is the maximal component of $p$ recurrent at exactly $k$ iterates of $S$ and 
$$q(p,k)\wedge q(p,m)=0,\quad \mbox{for} \quad k\ne m,\quad  k,m\in\N.$$

Writing Theorem \ref{thm-poincare} in terms of $q(p,k),  k\in\N$ we obtain the next corollary.

\begin{corollary}\label{decomposition p}
	Let $(E,T,S,e)$ be a conditional expectation preserving system with $T$ strictly positive and  $S$ surjective, then	 for each component $p$ of $e$ we have 
	$$p=\sum\limits_{k=1}^\infty q(p,k).$$
	Here this summation is order convergent in $E$.
\end{corollary}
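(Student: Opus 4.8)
The plan is to identify the partial sums $\sum_{k=1}^n q(p,k)$ with the component $p\wedge\bigvee_{k=1}^n S^{-k}p$, then to let $n\to\infty$ and invoke the Poincaré recurrence theorem. First, since the $q(p,k)$ are pairwise disjoint components of $e$ (as recorded above), the partial sums are genuine suprema,
$$\sum_{k=1}^n q(p,k)=\bigvee_{k=1}^n q(p,k),$$
because $a\wedge b=0$ gives $a+b=a\vee b$ for positive elements, and disjointness propagates to finite joins via distributivity. It is worth noting at the outset that surjectivity of $S$ together with strict positivity of $T$ forces injectivity: if $Sf=0$ then $T|f|=TS|f|=T|Sf|=0$, so $|f|=0$. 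Hence $S$ is bijective, $S^{-1}$ is again a Riesz homomorphism fixing $e$, and each $S^{-k}p$ is a genuine component of $e$. Thus all the meets and joins below live in the Boolean algebra $C_e$ of components of $e$, where meet distributes over join and each element has complement $e-(\cdot)$.

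The key step is the finite identity
$$\bigvee_{k=1}^n q(p,k)=p\wedge\bigvee_{k=1}^n S^{-k}p.$$
Writing $b_k:=S^{-k}p$ and $A_k:=\bigvee_{j=1}^k b_j$, distributivity of $p\wedge(\cdot)$ over finite joins reduces this to the disjointification identity $\bigvee_{k=1}^n\bigl(b_k\wedge(e-A_{k-1})\bigr)=A_n$, which I would establish by induction on $n$. The base case $n=1$ is immediate since $b_1\le e$ and $A_0=0$. For the inductive step I would use
$$A_{n-1}\vee\bigl(b_n\wedge(e-A_{n-1})\bigr)=(A_{n-1}\vee b_n)\wedge\bigl(A_{n-1}\vee(e-A_{n-1})\bigr)=A_n\wedge e=A_n,$$
the middle equality being distributivity in $C_e$ and $A_{n-1}\vee(e-A_{n-1})=e$ the complementation law.

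Finally I would pass to the limit. The components $A_n=\bigvee_{k=1}^n S^{-k}p$ increase to $\bigvee_{k=1}^\infty S^{-k}p$, and the infinite distributive law valid in the Dedekind complete space $E$ gives
$$\bigvee_{k=1}^\infty q(p,k)=\sup_n\bigl(p\wedge A_n\bigr)=p\wedge\bigvee_{k=1}^\infty S^{-k}p.$$
Applying Theorem~\ref{thm-poincare} with $q=p$, the component $p$ is recurrent with respect to itself, that is $p\le\bigvee_{n\in\N}S^{-n}p$, so the right-hand side collapses to $p$. Hence the increasing sequence of partial sums $\sum_{k=1}^n q(p,k)=p\wedge A_n$ order converges upward to $p$, which is precisely the asserted order-convergent representation. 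I expect the main obstacle to be the careful bookkeeping in the finite disjointification identity together with the justification that $p\wedge(\cdot)$ commutes with the supremum over $n$ (the infinite distributive law); once these are in place, Poincaré recurrence supplies the conclusion at essentially no further cost.
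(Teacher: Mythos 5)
Your proposal is correct and takes essentially the same approach as the paper: the paper offers no separate argument for this corollary, presenting it as Theorem \ref{thm-poincare} ``written in terms of $q(p,k)$'', and your proof is precisely that rewriting made explicit --- the finite disjointification identity $\bigvee_{k=1}^n q(p,k)=p\wedge\bigvee_{k=1}^n S^{-k}p$, the infinite distributive law to pass to the limit, and Poincar\'e recurrence of $p$ with respect to itself to collapse $p\wedge\bigvee_{k\in\N}S^{-k}p$ to $p$. Your preliminary observations (bijectivity of $S$ from surjectivity plus strict positivity of $T$, and sum equals supremum for the pairwise disjoint $q(p,k)$) are also consistent with the paper's setup, so there are no gaps.
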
 

From the definition of $q(p,k)$ we have that  
$$S^k q(p,k) \le p,$$ 
for all  $k \in \N$.

\begin{lemma}\label{q-disjoint}
Let $(E,T,S,e)$ be a conditional expectation preserving system with $T$ strictly positive and  $S$ surjective, then
for all $m,n \in \N$ with $0 \le i \le m-1, \; 0\le j \le n-1$ and $(i,m) \ne (j,n)$ we have 
\begin{equation}\label{disjointness-1}
S^i q(p,m) \wedge S^j q(p,n)=0.
\end{equation}
\end{lemma}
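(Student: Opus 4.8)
The plan is to reduce the two-index disjointness to a single one-sided statement about $q(p,n)$ by peeling off a common power of $S$, and then to exploit the disjointness already built into the definition of $q(p,n)$. First I would record that $S$ is in fact bijective: since $S$ is a surjective Riesz homomorphism with $TS=T$ and $T$ is strictly positive, $Sf=0$ forces $T|f|=TS|f|=0$ and hence $f=0$, so $S^{-1}$ exists and is again a Riesz homomorphism fixing $e$. In particular each $S^{\pm k}$ maps $C_e$ into $C_e$ and, being a Riesz homomorphism, preserves disjointness in both directions. Because the expression $S^iq(p,m)\wedge S^jq(p,n)$ is symmetric in the two pairs, I may assume $i\le j$ and set $d:=j-i$, so that $0\le d\le j\le n-1$. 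Applying the (disjointness-preserving, injective) Riesz homomorphism $S^{-i}$, the claim \eqref{disjointness-1} becomes equivalent to
$$q(p,m)\wedge S^{d}q(p,n)=0.$$

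Next I would split on the value of $d$. If $d=0$ then $i=j$, so the hypothesis $(i,m)\ne(j,n)$ forces $m\ne n$, and the reduced identity is simply $q(p,m)\wedge q(p,n)=0$, which is exactly the disjointness of the $q(p,k)$ recorded immediately before the statement.

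The substantive case is $1\le d\le n-1$, and here the key observation is that $d$ lands precisely in the range of indices excluded by $q(p,n)$. Indeed, from the definition $q(p,n)\le e-\bigvee_{\ell=1}^{n-1}S^{-\ell}p\le e-S^{-d}p$, and since $S^{-d}p\in C_e$ this yields $q(p,n)\wedge S^{-d}p=0$. Applying the Riesz homomorphism $S^{d}$ and using $S^{d}S^{-d}p=p$ gives $S^{d}q(p,n)\wedge p=0$. Finally, combining this with $q(p,m)\le p$ and monotonicity of $\wedge$ gives $q(p,m)\wedge S^{d}q(p,n)\le p\wedge S^{d}q(p,n)=0$, which is the reduced identity and hence the lemma.

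I expect the only real subtlety to be the bookkeeping on the index ranges: the asymmetry introduced by the choice $i\le j$ is exactly what guarantees $d=j-i\le n-1$, and this is precisely what places $S^{-d}p$ among the components that $q(p,n)$ avoids. The invertibility of $S$ (needed both to make sense of $S^{-\ell}p$ and to cancel the common factor $S^{-i}$) is the other point to get right; everything else is routine manipulation of components of $e$ under a Riesz homomorphism.
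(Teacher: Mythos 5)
Your proof is correct and follows essentially the same route as the paper's: factor out a common power of $S$ (justified because $S$ is bijective, since $TS=T$ with $T$ strictly positive forces injectivity) and then invoke the exclusion term $e-\bigvee_{\ell=1}^{n-1}S^{-\ell}p$ in the definition of $q(p,n)$, together with the pairwise disjointness of the $q(p,k)$ when the shifts coincide. The only difference is bookkeeping: your WLOG on the shifts ($i\le j$) collapses the paper's three cases (which split on $m$ versus $n$ and $i$ versus $j$) into two, since $d=j-i\le n-1$ always lands in the exclusion window of $q(p,n)$, so the corresponding window of $q(p,m)$ is never needed.
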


\begin{proof}
Let $i,j,m,n$ be as above.

Case I: If $m\le n$ and $n-1\ge j> i\ge 0$, then as $S^j$ is a Riesz homomorphism,
\begin{eqnarray*}
S^i q(p,m) \wedge S^j q(p,n)=S^j(S^{i-j}q(p,m) \wedge q(p,n)).
\end{eqnarray*}
Here
\begin{eqnarray*}
 S^{i-j}q(p,m) \wedge q(p,n)\le S^{i-j}p \wedge  \left(e-\bigvee\limits_{k=1}^{n-1}S^{-k}p\right)=0 
\end{eqnarray*}
since $i-j\in\{-k|k=1,\dots,n-1\}$.

Case II: If $m<n$ and $i=j$, then
\begin{eqnarray*}
S^i q(p,m) \wedge S^j q(p,n)=S^i(q(p,m) \wedge q(p,n))=0
\end{eqnarray*}
as $m\ne n$ and $S^i$ is a Riesz homomorphism.

Case III: If $m<n$ and $m-1\ge i> j\ge 0$, then 
\begin{eqnarray*}
S^i q(p,m) \wedge S^j q(p,n)=S^i(q(p,m) \wedge S^{j-i}q(p,n)).
\end{eqnarray*}
Here
\begin{eqnarray*}
 q(p,m) \wedge S^{j-i}q(p,n)\le  \left(e-\bigvee\limits_{k=1}^{m-1}S^{-k}p\right)\wedge S^{j-i}p=0 
\end{eqnarray*}
since $j-i\in\{-k|k=1,\dots,m-1\}$.
\end{proof}

Rewriting the expression for the first recurrence time  for $p$ a component of $e$, $n(p)$, from \cite{azouzi2022kac}, in terms of $q(p,k)$, we get
$$n(p)=\sum\limits_{k=1}^\infty k q(p,k).$$

The conditional Kac formula of \cite{azouzi2022kac} gives the conditional expectation of $n(p)$, as follows.

\begin{theorem}[Kac]\label{kac-thm}
 Let $(E,T,S,e)$ be a conditionally ergodic conditional expectation preserving system where $T$ is strictly positive, $E$ is $T$-universally complete and $S$ is surjective. 
For each $p$ a component of $e$ we have that 
 $$Tn(p)=P_{Tp}e$$
 where $P_{Tp}$ is the band projection onto the band in $E$ generated by $Tp$.
\end{theorem}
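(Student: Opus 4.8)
The plan is to reduce the computation of $Tn(p)$ to the evaluation of a single Rokhlin tower built over $p$, and then to identify that tower with the band component $P_{Tp}e$ by combining the disjointness of the tower levels (Lemma \ref{q-disjoint}) with conditional ergodicity. Throughout I would write
\[ U:=\bigvee_{k=1}^{\infty}\bigvee_{j=0}^{k-1}S^{j}q(p,k) \]
for the supremum of all tower levels; since by Lemma \ref{q-disjoint} these levels are mutually disjoint components of $e$ bounded above by $e$, the supremum is an order-convergent sum in the Dedekind complete space $E$ and $U\in C_e$. First I would carry out the counting rearrangement: because $TS^{j}=T$ on $E$ we have $TS^{j}q(p,k)=Tq(p,k)$, so by the order continuity of $T$ and the disjointness of the levels,
\[ Tn(p)=\sum_{k=1}^{\infty}kTq(p,k)=\sum_{k=1}^{\infty}\sum_{j=0}^{k-1}TS^{j}q(p,k)=TU. \]
This is the exact Riesz-space analogue of the classical identity $\int_{A}n_{A}\,d\mu=\mu(\text{tower})$, and it reduces the theorem to the claim $TU=P_{Tp}e$.

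Second I would locate the tower inside the correct band. Setting $B:=P_{Tp}e$, one checks $B\in R(T)$ (it is the component of $e$ in the band generated by $Tp\in R(T)$), so $B$ is $S$-invariant by Lemma \ref{baw-12-com} and $TB=B$. Using the averaging property of $T$, namely $T((e-B)p)=(e-B)Tp=0$, together with strict positivity gives $p\le B$; the $S$-invariance of the band then forces $S^{j}q(p,k)\le B$ for every level, whence $U\le B$. Since $T$ is strictly positive and $U\le B$, the theorem is now \emph{equivalent} to the set-theoretic identity $U=B$.

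The heart of the argument, and the step I expect to be the main obstacle, is proving $U=B$. I would regroup the tower by levels: with $p_{j}:=\sum_{k>j}q(p,k)$ one has $p_{0}=p$ by Corollary \ref{decomposition p}, $p_{j-1}=q(p,j)+p_{j}$, and $U=\bigvee_{j\ge0}S^{j}p_{j}$. Applying $S$ (order continuous) and splitting $S^{i}p_{i-1}=S^{i}q(p,i)\vee S^{i}p_{i}$ gives
\[ SU=\Bigl(\bigvee_{i\ge1}S^{i}q(p,i)\Bigr)\vee\Bigl(\bigvee_{i\ge1}S^{i}p_{i}\Bigr)\le p\vee\bigvee_{i\ge1}S^{i}p_{i}=U, \]
the inequality holding because $S^{i}q(p,i)\le p$ for every $i$. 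Thus $SU\le U$, while $T(U-SU)=TU-T(SU)=0$ since $TS=T$; strict positivity upgrades this to $SU=U$. Hence $U\in{\cal I}_{S}=R(T)$ by conditional ergodicity, so $TU=U$. Finally $U\ge S^{0}p_{0}=p$ yields $U=TU\ge Tp$, and as $U$ is a component of $e$ dominating $Tp$ it dominates the band component that $Tp$ generates, i.e. $U\ge P_{Tp}e=B$. With the reverse inequality from the previous paragraph this gives $U=B$, and therefore $Tn(p)=TU=U=P_{Tp}e$.

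The technical points to verify along the way are the interchanges of $S$ and $T$ with the order-convergent suprema defining $U$, $SU$ and the level sets $p_{j}$; these are justified by the order continuity of $S$ and $T$, the Riesz-homomorphism property (preservation of disjointness and of suprema), and the fact that every component in sight is dominated by $e$, so all suprema exist in $E$. Surjectivity of $S$ is used only through Poincaré's theorem (Theorem \ref{thm-poincare}) and Corollary \ref{decomposition p}, which guarantee $p_{0}=p$, that is, that the tower genuinely exhausts its base $p$; this is precisely what makes the $S$-invariance-plus-ergodicity mechanism close the argument.
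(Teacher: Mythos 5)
Your proof is correct, but note that this paper does not actually prove Theorem \ref{kac-thm}: it quotes the Kac formula from \cite{azouzi2022kac}, so there is no in-paper argument to compare against; your proposal must stand on its own, and it does. The counting identity $Tn(p)=TU$ follows, as you say, from $TS^{j}=T$, the order continuity of $T$, and the disjointness of the tower levels (Lemma \ref{q-disjoint}); the inclusion $U\le B:=P_{Tp}e$ follows from the averaging property, strict positivity, and the $S$-invariance of elements of $R(T)$ (Lemma \ref{baw-12-com}); and your decisive step --- $SU\le U$ from $S^{i}q(p,i)\le p$, upgraded to $SU=U$ via $T(U-SU)=0$ and strict positivity, then $U\in{\cal I}_{S}=R(T)$ by conditional ergodicity --- closes the argument cleanly, since $U\ge p$ gives $U=TU\ge Tp$ and hence $U\ge P_{Tp}e$ because $nU\wedge e=U$ for the component $U$. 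Two points you should make explicit to remove any gap: (i) the existence of $n(p)=\sum_{k}k\,q(p,k)$ as an element of $E$ is precisely where $T$-universal completeness enters --- the partial sums form an increasing sequence whose $T$-images are dominated by $Te=e$ (your own counting computation shows this), so they order converge in $E=L^{1}(T)$; (ii) the claim $P_{Tp}e\in R(T)$, which you need both for the averaging step $T((e-B)p)=(e-B)Tp$ and for applying Lemma \ref{baw-12-com} to $B$, rests on the order closedness of $R(T)$ in $E$ (so that $P_{Tp}e=\sup_{n}(nTp\wedge e)$ stays in $R(T)$); this is a standard fact from \cite{kuo2005conditional}, and the paper itself uses it silently (for instance in the proof of Lemma \ref{aperiodic-lemma}, where $e-P_{TK_{N}(p)}e$ is asserted to lie in $R(T)$), but it deserves a citation in a self-contained proof.
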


 
 \section{Kakutani-Rokhlin Lemma - $\epsilon$-free}
 
We recall  an $\epsilon$-free version  of the Kakutani-Rokhlin decomposition for ergodic 
measure-preserving systems from \cite[Theorem 2]{lehrer-weiss} and \cite[Theorem 6.24]{eisner2015operator}. We note here that these references state the bound $1-n\mu(A)$, however their proofs yield the better bound given below.

\begin{theorem}
	Let  $(\Omega,{\cal B},\mu,\tau)$ be an ergodic measure preserving system, let $A \in  {\cal B}$ with $\mu(A)>0$ and $n \in \N$. Then there is a set $B \in  {\cal B}$ such that $B, \tau^{-1} B, ..., \tau^{1-n}B$ are pairwise disjoint and $$\mu\left(\bigcup\limits_{i=0}^{n-1}\tau^{-i} B\right) \ge 1-(n-1)\mu(A).$$ 
\end{theorem}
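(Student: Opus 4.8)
The plan is to reassemble a Kakutani skyscraper over $A$ into a tower of uniform height $n$, discarding only the overhang at the top of each column. Assume, as usual in this setting, that $\tau$ is invertible and $\mu(\Omega)=1$. First I would observe that the set $W:=\bigcup_{j\ge 0}\tau^{j}A$ satisfies $\tau W\subseteq W$, so $W$ is $\tau$-invariant modulo a null set; since it contains $A$ and $\mu(A)>0$, ergodicity forces $\mu(W)=1$. Together with the Poincar\'e recurrence theorem this makes the first-return time $R(x):=\min\{k\ge 1:\tau^{k}x\in A\}$ finite for almost every $x\in A$. Writing $A=\bigcup_{k\ge1}A_k$ with $A_k:=\{x\in A:R(x)=k\}$ (a disjoint union), the columns $C_k:=\bigcup_{i=0}^{k-1}\tau^{i}A_k$ are pairwise disjoint, and a short argument tracking each point's most recent visit to $A$ shows that they partition $W$, and hence $\Omega$, up to a null set. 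Thus the levels $\{\tau^{i}A_k:k\ge1,\ 0\le i<k\}$ form a measurable partition of $\Omega$, each of measure $\mu(A_k)$ since $\tau$ preserves $\mu$.

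Next I would cut each column into stacked blocks of height $n$. Inside $C_k$ I keep $\lfloor k/n\rfloor$ consecutive blocks from the bottom, the $b$-th occupying levels $bn,\dots,bn+n-1$ for $0\le b\le\lfloor k/n\rfloor-1$; these cover levels $0,\dots,n\lfloor k/n\rfloor-1$ and leave the top $k\bmod n<n$ levels uncovered. Taking the base $D:=\bigcup_{k\ge1}\bigcup_{b=0}^{\lfloor k/n\rfloor-1}\tau^{bn}A_k$ to be the union of the bottom level of every block, the forward images $\tau^{i}D$, $0\le i\le n-1$, meet $C_k$ in the levels $bn+i$, which run through $0,\dots,n\lfloor k/n\rfloor-1$ each exactly once. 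Hence (distinct levels within a column, distinct columns disjoint) $D,\tau D,\dots,\tau^{n-1}D$ are pairwise disjoint and $\bigcup_{i=0}^{n-1}\tau^{i}D$ equals every column minus its overhang. Since $\tau$ is invertible, setting $B:=\tau^{n-1}D$ yields the stated tower, because $\{\tau^{-i}B:0\le i\le n-1\}=\{\tau^{j}D:0\le j\le n-1\}$.

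Finally I would bound the uncovered set $F:=\bigcup_{k\ge1}\bigcup_{i=n\lfloor k/n\rfloor}^{k-1}\tau^{i}A_k$. Each of its levels has measure $\mu(A_k)$ and there are $k\bmod n\le n-1$ of them per column, so
$$\mu(F)=\sum_{k\ge1}(k\bmod n)\,\mu(A_k)\le (n-1)\sum_{k\ge1}\mu(A_k)=(n-1)\mu(A),$$
whence $\mu\big(\bigcup_{i=0}^{n-1}\tau^{-i}B\big)=\mu\big(\bigcup_{i=0}^{n-1}\tau^{i}D\big)=1-\mu(F)\ge 1-(n-1)\mu(A)$, as required. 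The sharp constant $n-1$, in place of the $n$ of the cited references, is exactly the effect of discarding only the overhang of height $k\bmod n\le n-1$; note that the short columns $k<n$ are discarded in full yet still contribute only $k\,\mu(A_k)\le(n-1)\mu(A_k)$.

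I expect the main obstacle to be the first step: verifying that the skyscraper exhausts $\Omega$ modulo a null set. This is where ergodicity (to force $\mu(W)=1$) and Poincar\'e recurrence (to force $R<\infty$ almost everywhere) are both needed, and where one must check that the column decomposition is a genuine measurable partition. Once the columns are in hand, the reassembly into a height-$n$ tower and the measure count are routine.
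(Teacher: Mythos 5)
Your proof is correct and is essentially the paper's own approach: the paper obtains this statement as the specialization of its Riesz-space Kakutani--Rokhlin lemma (Theorem \ref{kakutani1}), whose proof likewise decomposes $A$ by first return time, takes as tower base every $n$-th level of each return-time column (the component $q=\sum_{j\ge 0}S^{nj}R_{n(j+1)}$ there is exactly your $D$), and bounds the discarded overhang by $(k\bmod n)\,\mu(A_k)\le (n-1)\mu(A_k)$ per column, with Kac's formula playing the role of your observation that the skyscraper partitions $\Omega$ up to a null set. Your standing assumption that $\tau$ is invertible is consistent with the paper, which throughout requires $S$ (equivalently $\tau$, in the measure-theoretic model) to be a.e. bijective/surjective.
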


We now give a conditional Riesz space version of the previous result. If, in the following result, $p$ is taken as the characteristic function, $\chi_A$,  with $A$ of the above result,  and $T$ is the expectation with respect a probability measure $\mu$, then the below result yields immediately the above result. However, if $T$ is a conditional expectation, then the below result yields the above but with $\mu$ being the conditional probability induced by $T$.

\begin{theorem}[Kakutani-Rokhlin lemma] \label{kakutani1}
Let $(E,T,S,e)$ be a conditionally ergodic conditional expectation preserving system with $S$ surjective. 
Let $n \in \N$ and $p$ be a component of $e$,
then there exists a component  $q$ of $P_{Tp}e$ such that $q, Sq, \dots, S^{n-1}q$ are pairwise disjoint and \begin{equation}\label{april-thm}
T\left(\bigvee\limits_{j=0}^{n-1}S^j q\right) \ge \left(P_{Tp}e - (n-1) Tp\right)^+.
\end{equation}
\end{theorem}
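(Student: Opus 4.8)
The plan is to build an explicit Rokhlin tower out of the return-time decomposition of $p$ supplied by Corollary \ref{decomposition p} and Lemma \ref{q-disjoint}. The components $S^{i}q(p,k)$, for $k\in\N$ and $0\le i\le k-1$, are pairwise disjoint by Lemma \ref{q-disjoint} and form a ``skyscraper'': for each $k$ the column $q(p,k),Sq(p,k),\dots,S^{k-1}q(p,k)$ has height $k$. By order continuity of $T$ together with $TS=T$, Corollary \ref{decomposition p} and the Kac formula (Theorem \ref{kac-thm}), the full skyscraper $F:=\sum_{k\ge 1}\sum_{i=0}^{k-1}S^{i}q(p,k)$ satisfies $TF=\sum_{k\ge1}kTq(p,k)=Tn(p)=P_{Tp}e$. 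To extract a tower of fixed height $n$, I would, in each column of height $k\ge n$, place bases at the levels whose height is a multiple of $n$ and small enough that a block of height $n$ still fits inside the column; the portion left over at the top of each column, together with the entirety of each short column ($k<n$), will be the uncovered part that the bound must control.

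Concretely, I would set
$$q:=\sum_{k=n}^{\infty}\ \sum_{m=0}^{\lfloor k/n\rfloor-1}S^{mn}q(p,k),$$
an order-convergent sum of pairwise disjoint components of $e$, hence itself a component of $e$. To see that $q$ is in fact a component of $P_{Tp}e$, I would first invoke the averaging inequality $P_{f}\le P_{Tf}$ underlying Lemma \ref{baw-12-com}, giving $q(p,k)\le p=P_{p}e\le P_{Tp}e$. Since $P_{Tp}e=Tn(p)\in R(T)$ by Theorem \ref{kac-thm}, Lemma \ref{baw-12-com} yields $S(P_{Tp}e)=P_{Tp}e$, whence $S^{i}q(p,k)\le S^{i}(P_{Tp}e)=P_{Tp}e$ for every level. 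Thus $q\le P_{Tp}e$, and as $q$ is a component of $e$ with $q\le P_{Tp}e\le e$ it is a component of $P_{Tp}e$.

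For the disjointness of $q,Sq,\dots,S^{n-1}q$, applying the Riesz homomorphism $S^{j}$ termwise gives $S^{j}q=\sum_{k\ge n}\sum_{m=0}^{\lfloor k/n\rfloor-1}S^{mn+j}q(p,k)$, where each exponent satisfies $mn+j\le k-1$, so Lemma \ref{q-disjoint} is applicable. For $0\le a<b\le n-1$ one has $mn+a\ne m'n+b$ for all admissible $m,m'$ (as $0<b-a<n$), so every cross term $S^{mn+a}q(p,k)\wedge S^{m'n+b}q(p,k')$ vanishes and $S^{a}q\wedge S^{b}q=0$. The same disjointness shows the join collapses to a sum of distinct skyscraper levels,
$$\bigvee_{j=0}^{n-1}S^{j}q=\sum_{k=n}^{\infty}\ \sum_{i=0}^{\lfloor k/n\rfloor n-1}S^{i}q(p,k),$$
which is exactly the ``covered'' part of $F$.

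Finally, for the bound, the uncovered part $F-\bigvee_{j=0}^{n-1}S^{j}q$ consists, in each column $k$, of the top $k-\lfloor k/n\rfloor n=k\bmod n$ levels (and all $k$ levels when $k<n$); in every case at most $n-1$ levels. Hence, using $TS=T$ and order continuity of $T$ again, $T\big(F-\bigvee_{j=0}^{n-1}S^{j}q\big)\le(n-1)\sum_{k\ge1}Tq(p,k)=(n-1)Tp$, so that $T\big(\bigvee_{j=0}^{n-1}S^{j}q\big)=P_{Tp}e-T\big(F-\bigvee_{j=0}^{n-1}S^{j}q\big)\ge P_{Tp}e-(n-1)Tp$; combined with the trivial bound $T\big(\bigvee_{j=0}^{n-1}S^{j}q\big)\ge 0$ this yields $T\big(\bigvee_{j=0}^{n-1}S^{j}q\big)\ge(P_{Tp}e-(n-1)Tp)^{+}$. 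I expect the main obstacle to be largely bookkeeping: justifying that the infinite joins and sums converge in order and that $S^{j}$ and $T$ may be brought inside them, and keeping the level-counting ($mn+n-1\le k-1$, remainder $k\bmod n$) exact so that the disjointness and the ``at most $n-1$ uncovered levels per column'' estimate hold simultaneously.
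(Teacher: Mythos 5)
Your proposal is correct and takes essentially the same route as the paper: your base $q=\sum_{k\ge n}\sum_{m=0}^{\lfloor k/n\rfloor-1}S^{mn}q(p,k)$ is exactly the paper's $q=\sum_{j\ge 0}S^{nj}R_{n(j+1)}$ with $R_k=\sum_{i\ge k}q(p,i)$ unpacked, the disjointness of $q,Sq,\dots,S^{n-1}q$ rests on the same Lemma \ref{q-disjoint}, and the bound comes from the same Kac-formula accounting $P_{Tp}e=Tn(p)=\sum_k kTq(p,k)$ (your ``uncovered levels'' computation is just the complementary form of the paper's $\sum_k n\left(\frac{k}{n}-\left[\frac{k}{n}\right]\right)Tq(p,k)\le(n-1)Tp$). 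Your one genuine addition is the explicit verification that $q\le P_{Tp}e$, via $P_p\le P_{Tp}$ and the $S$-invariance of $P_{Tp}e=Tn(p)\in R(T)$ from Lemma \ref{baw-12-com}, a detail the paper's proof leaves implicit when it asserts only that $q$ is a component of $e$.
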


\begin{proof}
By Corollary \ref{decomposition p}, $p$ can be decomposed into a sum of disjoint components as follows
 $$p=\sum\limits_{i=1}^\infty q(p,i).$$
Let $$R_k=\sum\limits_{i=k}^\infty q(p,i)=\bigvee\limits_{i=k}^\infty q(p,i),$$
then $R_k$ is the maximal component of $p$ with no component recurrent in under $k$ steps.

For fixed $n\in \N$ and $k,j\in\N_0$ with $k\ne j$, from (\ref{disjointness-1}), we have 
\begin{equation}\label{april-13-1}
S^{nj}R_{n(j+1)}\wedge S^{nk}R_{n(k+1)}
=\bigvee\limits_{i\ge n(j+1)} \bigvee\limits_{r\ge n(k+1)} \left(S^{nj}q(p,i)\wedge S^{nk}q(p,r)\right)=0,
\end{equation}
since $nj<i$, $nk<r$ and $nj\ne nk$.
Let
\begin{equation}\label{eq-q}
 q:=\sum\limits_{j=0}^{\infty} S^{nj}R_{n(j+1)}=\bigvee\limits_{j=0}^{\infty} S^{nj}R_{n(j+1)}= \sum\limits_{j=0}^{\infty}\sum\limits_{i \ge n(j+1)}S^{nj} q(p,i).
 \end{equation}
Here $q$ is a component of $e$. 

We now show that $S^i q \wedge S^j q=0$, for all $0\le i<j \le n-1$. 
For this it suffices to prove that $q \wedge S^k q=0$, for all $1 \le k \le n-1$.
If $j,m\in\N_0$, with $i \ge n(j+1)$ and $r \ge n(m+1)$ then 
$nj\ne nm+k$, $i>nj$ and $r>nm+k$, so, by (\ref{disjointness-1}), 
\begin{equation}\label{x-k}
q\wedge S^k q= \sum\limits_{j,m=0}^{\infty}\sum\limits_{i \ge n(j+1)}
\sum\limits_{r \ge n(m+1)}S^{nj} q(p,i)\wedge  S^{nm+k} q(p,r)=0.
\end{equation}
Thus  $q, Sq,\dots ,S^{n-1}q$ are disjoint.

We now proceed to the proof of (\ref{april-thm}).
From the definition of $q$ in (\ref{eq-q}) we have 
$$\bigvee\limits_{i=0}^{n-1}S^i q=\sum\limits_{i=0}^{n-1}S^i q =\sum\limits_{i=0}^{n-1}\sum\limits_{j=0}^{\infty}\sum\limits_{k \ge n(j+1)}S^{nj+i} q(p,k).$$
Applying $T$ to the above equation and using $TS^i=T, i\in\N_0,$ along with the order continuity of $T$, we have 
\begin{align*}
T\left(\bigvee\limits_{k=0}^{n-1}S^k q\right)
&=\sum\limits_{k=0}^{n-1}\sum\limits_{j=0}^{\infty}\sum\limits_{i= n(j+1)}^\infty T q(p,i)\\
&=\sum\limits_{j=0}^{\infty}\sum\limits_{i=n(j+1)}^\infty nTq(p,i)\\
&=\sum\limits_{i=0}^\infty n \left[\frac{i}{n}\right]Tq(p,i).
\end{align*}

On the other hand,  by the Riesz space version of the Kac Theorem, i.e. Theorem \ref{kac-thm},  we have
 $$P_{Tp}e=Tn(p)=\sum\limits_{i=1}^\infty i Tq(p,i).$$
Therefore,
$$P_{Tp}e-T\left(\bigvee\limits_{i=0}^{n-1}S^i q\right) 
=\sum\limits_{i=1}^\infty n\left(\frac{i}{n}-\left[\frac{i}{n}\right]\right) Tq(p,i) \le \sum\limits_{i=1}^\infty (n-1) Tq(p,i) =(n-1)Tp,$$
concluding the proof of (\ref{april-thm}). \end{proof}

\begin{example}
We now  give an example of where Theorem \ref{kakutani1} cannot be improved to the $\epsilon$ approximation of Theorem \ref{theo ergodic}.  Consider the Riesz space $E=\R\times\R$ with componentwise ordering and weak order unit $e=(1,1)$ and order continuous Riesz homomorphism $S(x,y)=(y,x)$. We take as our conditional expectation 
$T(x,y)=\frac{x+y}{2}(1,1)$.  It is easily verified that  $(E,T,S,e)$ is a conditionally ergodic conditional expectation preserving system.   Taking $p=(1,0)$ in Theorem  \ref{kakutani1} we have that $Tp=\frac{1}{2}(1,1)$ giving
$$(P_{Tp}e-(n-1)Tp)^+=\left((1,1)-\frac{n-1}{2}(1,1)\right)^+
=\left\{\begin{array}{ll}(1,1),&n=1\\ \frac{1}{2}(1,1),&n=2\\ (0,0),& n\ge 3\end{array}\right..$$
The required components of $P_{Tp}e=e$ for the respective values of $n$ are $q_1=(1,1)$, $q_2=(0,1)$ and $q=(0,0)$ for $n\ge 3$. Then $q_n,\dots S^{n-1}q_n$ are disjoint and
$$T\left(\bigvee\limits_{j=0}^{n-1}S^j q_n\right)
=\left\{\begin{array}{ll}(1,1),&n=1\\
(1,1),&n=2\\
(0,0),&n\ge 3\end{array}\right..$$
For this example the $\epsilon$ bound of Theorem \ref{theo ergodic} fails for $0<\epsilon<\frac{1}{2}$ and $n=2$.
\end{example}

\begin{example}\label{remark-2025-1}
As in \cite[Section 5]{azouzi2022kac}, let $(\Omega,{\cal A},\mu)$ be a probability space, where $\mu$ is a complete measure.  Let $\Sigma$ be a sub-$\sigma$-algebra of ${\cal A}$. 
As the Riesz space $E$ we take the space of a.e. equivalence classes of measurable functions $f:\Omega\to \R$ for which the sequence $(\E[\min(|f(x)|, {\bf n})|\Sigma])_{n\in\N}$, is bounded above by an a.e. finite valued measurable function. 
Here ${\bf n}$ is the (equivalence class of the) function with value $n$ a.e.
For $f\in E$ with $f\ge 0$ we define
$$Tf=\lim_{n\to\infty} \E[\min(f(x), {\bf n})|\Sigma]$$
in the sense of a.e. pointwise limits. 
We now extend $T$ to $E$ by setting $Tf=Tf^+-Tf^-$. This $T$ is the maximal extension of $\E[\cdot|\Sigma]$ as a conditional expectation operator, and we will denote it again by $\E[\cdot|\Sigma]$.
The space $E$ has the a.e. equivalence class of the constant $1$ function as a weak order unit.
The space $E$ is a $T$-universally complete Riesz space  with  weak order unit ${\bf 1}$ and $T$ is a strictly positive Riesz space conditional expectation operator on $E$ having $T{\bf 1}={\bf 1}$.  
If we take $\tau:\Omega\to\Omega$ be a map with $\tau^{-1}(A)\in{\cal A}$ and $\E[\chi_{\tau^{-1}(A)}|\Sigma]=\E[\chi_{A}|\Sigma]$, for all $A\in {\cal A}$ and set 
$Sf:=f\circ \tau$, the Koopman map, then $S$ is a Riesz homomorphism on $E$ with $S{\bf 1}={\bf 1}$ and $TS=T$.  
Further if for each $A\in {\cal A}$ there is $B_A\in {\cal A}$ so that $\mu(A\Delta \tau^{-1}(B_A))=0$ then $S$  is a surjective.
 
The system $(E,T,S,e)$ is a conditional expectation preserving system, with $S$ surjective 
and
$$L_Sf=\lim_{n\to\infty}\frac{1}{n}\sum_{k=0}^{n-1}f\circ \tau^k$$
converges a.e. pointwise to a conditional expectation operator on $E$ (which when restricted to 
$L^1(\Omega,{\cal A},\mu)$ is a classical conditional expectation operator).  The system $(E,T,S,e)$ is conditionally ergodic if and only if $L_S=T$.
 
 Then Theorem \ref{kakutani1} gives that if $n \in \N$ and $A\in {\cal A}$,
then there exists  $B\in {\cal A}$ with $\E[A|\Sigma]>0$ a.e. on $B$ such that 
$B,  \tau^{-1}(B), \dots, \tau^{1-n}(B)$ are a.e.  pairwise disjoint and 
\begin{equation}\label{april-thm-2025}
\E\left[\left.\bigcup_{j=0}^{n-1} \tau^{-j}B\right|\Sigma\right] \ge \left(\chi_{\{\omega|\E[\chi_A|\Sigma](\omega)>0\}} - (n-1) \E[A|\Sigma]\right)^+.
\end{equation}
\end{example}

\section{Aperiodicity and an $\epsilon$-bounded decomposition}

A probability space $(\Omega,{\cal B}, \mu)$, $\mu$ is nonatomic if for any $A \in {\cal B}$ with $\mu(A)>0$ there exists $B \in {\cal B}$ with $B \subset A$ and $0< \mu(B) < \mu(A)$. 
On nonatomic measure spaces,  an $\epsilon$-bounded version of the Kakutani-Rokhlin decomposition can be obtained, see \cite[Corollary 6.25]{eisner2015operator} and \cite[Lemma 4.7]{petersen1989ergodic}:

\begin{theorem}
Let $\tau:\Omega \rightarrow \Omega$ be an ergodic measure preserving transformation on a nonatomic measure space $(\Omega, {\cal B}, \mu)$, $n\in\Z$ and $\epsilon >0$, then there is a measurable set $B \subset \Omega$ such that $B, \; \tau^{-1} B, \;\dots\; \tau^{1-n}B$\ are pairwise disjoint and cover $\Omega$ up to a set of measure less than $\epsilon$. 
\end{theorem}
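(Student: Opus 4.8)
The plan is to reduce the statement to the classical \emph{Kakutani skyscraper} construction and then to cut that skyscraper into towers of uniform height $n$, discarding a controlled remainder. The case $n=1$ is trivial (take $B=\Omega$), so assume $n\ge 2$ and normalise so that $\mu(\Omega)=1$. Since the family $\{\tau^{-i}B\}_{i=0}^{n-1}$ is the same collection of sets as $\{\tau^{j}(\tau^{n-1}B)\}_{j=0}^{n-1}$, it is equivalent (with $\tau$ invertible mod null, as is standard for the Rokhlin lemma) to produce a forward tower: a base $B_0$ for which $B_0,\tau B_0,\dots,\tau^{n-1}B_0$ are pairwise disjoint and whose union misses a set of measure less than $\epsilon$; the desired set is then $B=\tau^{n-1}B_0$.

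First I would fix the base of the skyscraper. Using nonatomicity repeatedly, choose $A\in{\cal B}$ with $0<\mu(A)<\epsilon/(n-1)$. Let $r_A(x)=\min\{k\ge 1:\tau^k x\in A\}$ be the first-return time to $A$, and set $A_k=\{x\in A:r_A(x)=k\}$. Ergodicity forces the forward-orbit set $\bigcup_{j\ge 0}\tau^{-j}A$ to equal $\Omega$ mod null, whence $r_A<\infty$ almost everywhere and $A=\bigsqcup_{k\ge 1}A_k$ up to a null set. Applying the same reasoning to $\tau^{-1}$ (also ergodic) shows that the columns $\{\tau^i A_k:0\le i<k\}$ are pairwise disjoint and cover $\Omega$ up to a null set; this is the Kakutani skyscraper over $A$.

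Next I would extract the base. Inside each column of height $k$ write $k=qn+s$ with $0\le s<n$ and $q=\lfloor k/n\rfloor$, and place into $B_0$ the levels $\tau^{in}A_k$ for $i=0,\dots,q-1$, that is,
\[
B_0:=\bigcup_{k\ge 1}\ \bigcup_{i=0}^{\lfloor k/n\rfloor-1}\tau^{in}A_k .
\]
Then $\tau^jB_0$, for $j=0,\dots,n-1$, sweeps out exactly the levels $0,1,\dots,qn-1$ of each column, so the $\tau^jB_0$ are pairwise disjoint, while the uncovered part of $\Omega$ is precisely the union of the top $s=k\bmod n$ levels of the columns. Since $\tau$ is measure preserving, every level of a column has the measure of its base, so the uncovered measure is
\[
\sum_{k\ge 1}(k\bmod n)\,\mu(A_k)\ \le\ (n-1)\sum_{k\ge 1}\mu(A_k)=(n-1)\mu(A)<\epsilon .
\]
Passing to $B=\tau^{n-1}B_0$ then yields the tower $B,\tau^{-1}B,\dots,\tau^{1-n}B$ required.

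The disjointness bookkeeping and the remainder count are routine. The step I expect to be the genuine obstacle is justifying that the skyscraper both has finite return times a.e. and covers $\Omega$ up to a null set: this is exactly where ergodicity (via the invariance argument for $\bigcup_{j\ge0}\tau^{-j}A$, together with Poincaré recurrence) is indispensable, and it is also where \emph{nonatomicity} enters, since without base sets of arbitrarily small positive measure the bound $(n-1)\mu(A)$ cannot be driven below $\epsilon$. A secondary point of care is matching the orientation of the tower ($\tau^{-i}B$ versus $\tau^{i}B_0$), which relies on $\tau$ being essentially invertible.
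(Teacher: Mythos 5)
Your argument is correct, and it is the classical skyscraper proof; but note that the paper itself gives no proof of this statement --- it is recalled as background, with citations to \cite{eisner2015operator} and \cite{petersen1989ergodic} --- so the meaningful comparison is with the paper's proofs of its Riesz space analogues, Theorems \ref{kakutani1} and \ref{theo ergodic}. There the mechanism is structurally parallel to yours but differently founded. Your base
$B_0=\bigcup_{k}\bigcup_{i=0}^{\lfloor k/n\rfloor-1}\tau^{in}A_k$
is exactly the measure-theoretic shadow of the component $q=\sum_{j\ge 0}S^{nj}R_{n(j+1)}$ built in the proof of Theorem \ref{kakutani1}: the sets $A_k$ correspond to the components $q(p,k)$, the tails $\{r_A\ge k\}$ to $R_k$, and the Koopman operator turns your forward images into preimages (a harmless reflection of the tower, matching your final reorientation $B=\tau^{n-1}B_0$). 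Where you count the uncovered levels directly, via $\sum_k (k\bmod n)\,\mu(A_k)\le (n-1)\mu(A)$, the paper obtains the same estimate from the Kac formula (Theorem \ref{kac-thm}), which is how ``counting levels'' must be done once there are no points to count. And where you invoke nonatomicity to make the base $A$ small, the paper --- nonatomicity having no direct analogue in the measure-free setting --- must instead assume aperiodicity and manufacture a small base by a Zorn's lemma argument (Lemma \ref{aperiodic-lemma}) combined again with Kac. So your route buys an elementary, self-contained proof of the classical statement; the paper's route buys measure-free generality at the price of the Kac formula and the abstract aperiodicity apparatus.

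One point to flag: the statement as printed assumes only that $\tau$ is an ergodic measure preserving transformation, while your construction (forward images $\tau^iA_k$, disjointness of the columns, and the passage from $\tau^{-i}B$ to $\tau^{j}B_0$) needs $\tau$ to be invertible mod null, as you yourself note. This is consistent with the paper's context --- its introduction restricts to a.e.\ bijective transformations, and its Riesz space theorems assume $S$ surjective, which in the Koopman picture is precisely invertibility on the measure algebra --- but strictly speaking it is an added hypothesis. For a genuinely non-invertible $\tau$ one would have to build the tower from preimages throughout (or pass to the natural extension), and your column-disjointness argument would need to be redone in that language.
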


The original $\epsilon$-bounded version of the decomposition, as developed by Rokhlin, see \cite[page 10]{rokhlin1967lectures},  was posed for ergodic measure preserving systems which are  aperiodic. See also \cite{kakutani}.

On a probability space $(\Omega,{\cal B}, \mu)$,  an aperiodic transformation is a transformation whose periodic points form a set of measure $0$ (see \cite{rokhlin1967lectures}), that is $\mu(\{x \in \Omega \; | \;  \tau^p x=x \mbox{ for some } p \in \N \})=0$.
We recall Rokhlin's 1943 version of the Kakutani-Rokhlin Lemma requiring aperiodicity,  quoted from \cite{weiss1989work}. 	

\begin{theorem}
	If $\tau$ is an aperiodic automorphism, then for any natural number $n$ and any positive $\epsilon$, there exists a measurable set $A \subset \Omega$ such that the sets $A, \tau^{-1} A,...,\tau^{1-n}A$ are pairwise disjoint and the complement of their union has measure less than $\epsilon$.	
\end{theorem}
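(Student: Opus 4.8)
The plan is to prove this classical statement by building a Kakutani skyscraper over a base set of small measure and then chopping its columns into blocks of height $n$; this is the measure-theoretic prototype of the argument and runs in close analogy to the proof of the Riesz-space Theorem \ref{kakutani1}. First I would fix $n$ and $\epsilon$ and produce a base set $F\subset\Omega$ of positive measure that is a sweep-out set, meaning $\mu\bigl(\Omega\setminus\bigcup_{j\ge 0}\tau^{j}F\bigr)=0$, and whose measure is as small as we please, say $\mu(F)<\epsilon/(n-1)$.

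Given such an $F$, Poincar\'e recurrence guarantees that the first-return time $r(x)=\min\{k\ge 1:\tau^{k}x\in F\}$ is finite for a.e. $x\in F$, so $F$ splits into the level sets $F_k=\{x\in F:r(x)=k\}$, and the columns $F_k,\tau F_k,\dots,\tau^{k-1}F_k$ are pairwise disjoint with union over all $k$ exhausting $\Omega$ modulo a null set. On each column I would take as tower bases the levels $\tau^{in}F_k$ for $0\le i<\lfloor k/n\rfloor$; the resulting height-$n$ blocks are pairwise disjoint, within and across columns, by construction of the skyscraper, and the part of the column left uncovered consists of the top $k-n\lfloor k/n\rfloor<n$ levels. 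Setting $A$ to be the union over all columns of these chosen bases, the sets $A,\tau A,\dots,\tau^{n-1}A$ are pairwise disjoint, since $\tau^{s}A=\bigcup_{k}\bigcup_{i}\tau^{in+s}F_k$ occupies the levels congruent to $s$ modulo $n$ in the chopped region; equivalently $A',\tau^{-1}A',\dots,\tau^{1-n}A'$ are disjoint after replacing $A$ by $A'=\tau^{n-1}A$ and relabelling, which matches the statement.

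The waste bound is then the crux, and here it is the total base measure rather than Kac's formula that does the work: in a column of height $k\ge n$ the uncovered measure is $(k\bmod n)\,\mu(F_k)\le(n-1)\mu(F_k)$, while in a column of height $k<n$ the entire column, of measure $k\,\mu(F_k)\le(n-1)\mu(F_k)$, is wasted, so summing over $k$ the uncovered measure is at most $(n-1)\sum_k\mu(F_k)=(n-1)\mu(F)<\epsilon$. I expect the genuine obstacle to be the first step, the existence of sweep-out sets of arbitrarily small measure: this is exactly where aperiodicity is indispensable, since a positive-measure set of points of period $\le n$ would force short columns that cannot be chopped efficiently. I would secure it by a measurable exhaustion argument, using aperiodicity to find inside any positive-measure set, for any prescribed height, a subset whose first $n$ iterates are disjoint, and then extracting a maximal disjoint family of such towers, so that the saturations $\bigcup_{j}\tau^{j}F$ recover $\Omega$ modulo a null set while $\mu(F)$ remains below the prescribed threshold.
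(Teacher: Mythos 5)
Your proposal is the classical skyscraper proof, and structurally it is the same argument the paper runs in the Riesz-space setting; note that the paper never proves the quoted statement itself (it is recalled from \cite{weiss1989work}), but its Theorem~\ref{theo ergodic} and Corollary~\ref{cor aperiodic}, which specialize back to it, are proved by exactly your three steps. Your small sweep-out base extracted from a maximal disjoint family of height-$N$ towers is Lemma~\ref{aperiodic-lemma} (there done by Zorn's lemma, with $P_{Tc_N}e=e$ playing the role of ``the saturation is conull''); your columns $F_k,\tau F_k,\dots,\tau^{k-1}F_k$ chopped into height-$n$ blocks based at the levels $\tau^{in}F_k$, $0\le i<\lfloor k/n\rfloor$, are precisely the component $q=\sum_{j}S^{nj}R_{n(j+1)}$ in the proof of Theorem~\ref{kakutani1}; and your waste estimate $\sum_k(k\bmod n)\,\mu(F_k)\le(n-1)\mu(F)<\epsilon$ is the final display of that proof. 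One quibble: your remark that ``it is the total base measure rather than Kac's formula that does the work'' is a distinction without a difference, since the fact you rely on --- that the columns exhaust $\Omega$ modulo a null set, equivalently $\sum_k k\,\mu(F_k)=1$ --- is exactly Kac's formula (Theorem~\ref{kac-thm}), and it is in this form that the paper performs the identical bookkeeping. Your disjointness checks and the relabelling $A'=\tau^{n-1}A$ converting forward iterates into the backward iterates of the statement are correct.

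The genuine gap is the step you name but do not prove: that measure-theoretic aperiodicity (the periodic points form a null set) yields, inside every set of positive measure and for every prescribed $N$, a positive-measure subset whose first $N$ iterates are pairwise disjoint. Your maximality/exhaustion argument is fine once this is granted, and it is all that is then needed (the complement of the saturation of a maximal family is invariant, so a tower inside it would contradict maximality); but the inner claim is Halmos's basic lemma \cite{halmos2017lectures}, and it is not a formal consequence of the nullity of the periodic set. Its standard proof picks, for a.e.\ $x$, a member $B$ of a countable separating algebra with $x\in B$ and $\tau^jx\notin B$ for $j=1,\dots,N-1$, and observes that $B\setminus\bigcup_{j=1}^{N-1}\tau^{-j}B$ has the required disjoint iterates; this needs the underlying space to be a Lebesgue (standard) probability space, a hypothesis implicit in Rokhlin's formulation. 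This is also exactly where the paper's treatment diverges from yours: the Riesz-space definition of aperiodicity is taken to be precisely the property you are invoking (every nonzero component contains, for each $N$, a component $u$ with $q(u,k)\ne 0$ for some $k\ge N$), and the bridge from pointwise aperiodicity to this property is proved separately, in the unlabelled theorem of Section~4, via Rudolph's Lemma~3.12 \cite{rudolph} --- again using standardness. To complete your proof you must either prove or cite Halmos's lemma and make the standardness hypothesis explicit; as written, ``using aperiodicity to find\dots'' conceals the only non-routine step.
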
	

Let $(\Omega,{\cal B}, \mu)$ be a probability space. 
The measure $\mu$ is said to be continuous if for any $A \in {\cal B}$ with $\mu(A)>0$ and any $\alpha \in \R$ with $0< \alpha < \mu(A)$ there exists $B \in {\cal B}$ with $B \subset A$ and $ \mu(B)=\alpha$.  Note that every continuous measure is nonatomic.
If $\mu$ is a continuous 
measure and $\tau$ is an ergodic measure preserving transformation, then $\tau$ is aperiodic. 
Indeed, as $\tau$ is ergodic, there exists $p \in \N$ such that $\mu(A_p)>0$, 
where $A_p=\{x \in \Omega \; | \;\tau^p x=x \}$. 
Choose $B \subset A_p$ with $0 < \mu(B) < \frac{1}{p}$, which is possible as $\mu$ is 
a continuous measure. 
The set $C=B \cup \tau^{-1}B \cup \dots \cup \tau^{1-p}B$ is $\tau$-invariant and satisfies $0< \mu(C) <1$ contradicting the assumption of ergodicity. 

An aperiodic transformation on a continuous measure space need not be ergodic.  
For example, consider the unit square $[0,1]\times[0,1]$ with Lebesgue measure.
The transformation 
$$\tau(x,y)=((x+\alpha)\mbox{ mod }1,y) \; \; \forall\, (x,y) \in [0,1]\times [0,1],$$
with $\alpha \in [0,1]$ irrational,  is aperiodic but not ergodic.

We extend these decompositions to the measure free context of Riesz spaces and begin by giving (a non-pointwise) definition of periodicity in the setting of Riesz spaces.

\begin{definition}
Let $(E,T,S,e)$ be a conditional expectation preserving system and $v$ be a component of $e$.
We say that $(S,v)$ is periodic if there is $N \in \N$ so that for all components $c$ of $v$ with  $0 \ne c \ne v$ we have that  $q(c,k)=0$ for all $k \ge N$.	
\end{definition}

The logic of this definition is that for all such $c$ we have
$$c=\bigvee\limits_{k=1}^{N-1} q(c,k)$$
and $S^kq(c,k)\le c$,
for $k=1,\dots, N-1.$ 

We note that, as in the measure theoretic setting, aperiodicity is defined as a stronger constraint than the negation of periodicity.

\begin{definition}
Let $(E,T,S,e)$ be a conditional expectation preserving system and $v\ne 0$ be a component of $e$.
We say that $(S,v)$ is aperiodic,  if for each $N\in\N$ and each component $c\ne 0$ of $v$,  there exists $k \ge N$ and a component $u$ of $c$ with $q(u,k) \ne 0$.  
\end{definition}

\begin{theorem}
Consider $E=L^1(\Omega,{\cal B},\mu)$ a probability space with $Tf:=\mathbb{E}[f]{\bf 1}$, where $e:={\bf 1}$ is the constant function with value $1$ a.e.,  and $Sf:=f\circ \tau$ is the von Neumann map generated by $\tau$,  a measure preserving transformation with $\tau$ a.e. surjective.  Further assume that there is $G\in {\cal B}$ with $0<\mu(G)<1$.  In this case the measure theoretic definition of aperiodicity of $\tau$ is equivalent to the Riesz spaces definition of $(S,e)$ being aperiodic. 
\end{theorem}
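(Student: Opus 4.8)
The plan is to translate everything into the concrete probabilistic dictionary and reduce both notions of aperiodicity to a single statement about first–return times. Since $e=\mathbf 1$, the components of $e$ are precisely the classes $\chi_A$, $A\in\mathcal B$, and the components of a component $c=\chi_C$ are the $\chi_U$ with $U\subseteq C$ (modulo null sets). Because $S$ is surjective and $T$ is strictly positive, $S$ is also injective ($Sf=0$ gives $Tf=TSf=0$, so $f=0$) and hence bijective, so on a standard probability space $\tau$ is invertible mod null and $S^{-1}$ is the Koopman operator of $\sigma:=\tau^{-1}$; consequently $S^{-j}\chi_U=\chi_{\tau^{j}U}$ for $j\in\N$. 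Substituting this into the definition of $q(p,k)$ gives
$$q(\chi_U,k)=\chi_{Q(U,k)},\qquad Q(U,k)=U\cap\tau^{k}U\setminus\bigcup_{j=1}^{k-1}\tau^{j}U,$$
and $Q(U,k)$ is exactly the set of $x\in U$ whose first return to $U$ under $\sigma$ occurs at time $k$. Thus $q(\chi_U,k)\neq 0$ if and only if a positive–measure subset of $U$ has $\sigma$–first–return equal to $k$. In this language, Riesz aperiodicity of $(S,e)$ says that for every $N\in\N$ and every $C$ with $\mu(C)>0$ there are $U\subseteq C$ and $k\ge N$ with $\mu(Q(U,k))>0$, while measure aperiodicity of $\tau$ says $\mu(\mathrm{Per})=0$, where $\mathrm{Per}=\bigcup_{p\ge1}\{x:\tau^{p}x=x\}$ is the (common) periodic set of $\tau$ and $\sigma$.

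For the implication Riesz aperiodic $\Rightarrow$ measure aperiodic I would argue by contraposition. If $\mu(\mathrm{Per})>0$ then, $\mathrm{Per}$ being a countable union, some $p$ has $\mu(\mathrm{Fix}(\tau^{p}))>0$; put $C=\mathrm{Fix}(\tau^{p})$ and $N=p+1$. Every $x\in U\subseteq C$ satisfies $\sigma^{p}x=\tau^{-p}x=x\in U$, so its $\sigma$–first–return to $U$ is at most $p<N$; hence $Q(U,k)=\emptyset$ for all $U\subseteq C$ and all $k\ge N$, i.e. Riesz aperiodicity fails with this $C$ and $N$.

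The converse, measure aperiodic $\Rightarrow$ Riesz aperiodic, is where the real work lies, and it is here that one sees why the definition must quantify over subcomponents $u$ of $c$ rather than over $c$ itself: for $C=\Omega$ the return time to $C$ is identically $1$, so no information can come from $U=C$. Given $N$ and $C$ with $\mu(C)>0$, I would pass to the induced (first–return) transformation $\sigma_C$ on $C$, which is an invertible measure–preserving map of $(C,\mu|_C)$ by Poincaré recurrence (Theorem \ref{thm-poincare}). A $\sigma_C$–periodic point is automatically $\sigma$–periodic, since a finite $\sigma_C$–orbit closes up after finitely many $\sigma$–steps; as $\tau$ is aperiodic this forces $\sigma_C$ to be aperiodic on $C$. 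Applying the classical Rokhlin lemma to $\sigma_C$ yields a set $U\subseteq C$ with $\mu(U)>0$ for which $U,\sigma_C U,\dots,\sigma_C^{N-1}U$ are pairwise disjoint, so every $x\in U$ has $\sigma_C$–first–return, and hence $\sigma$–first–return, at least $N$. By the decomposition $\chi_U=\sum_{k}q(\chi_U,k)$ of Corollary \ref{decomposition p} (all return times being a.e. finite by Theorem \ref{thm-poincare}), the mass of $U$ sits entirely in the summands with $k\ge N$, so $\mu(Q(U,k))>0$ for some $k\ge N$; that is, $q(\chi_U,k)\neq 0$, establishing Riesz aperiodicity.

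The main obstacle is precisely this converse: a pure translation does not suffice, because one must exhibit, inside an arbitrary positive–measure set, a subset with arbitrarily long return times. The natural device is the induced transformation together with the classical Rokhlin lemma, and the only standing hypotheses it needs are that $(\Omega,\mathcal B,\mu)$ be a standard (Lebesgue) space and that $\tau$ be invertible mod null, the latter being guaranteed by the surjectivity of $S$; the condition $0<\mu(G)<1$ merely excludes the degenerate one–atom space, on which aperiodicity is vacuous.
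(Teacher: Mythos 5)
Your proposal is correct in substance, and your forward direction (Riesz aperiodic $\Rightarrow$ measure aperiodic) is essentially the paper's: both argue from a positive-measure set of $\tau^p$-fixed points, on which every subset has first-return time at most $p$, so that $q(u,k)=0$ for all $k>p$ and all components $u$ below that set. For the converse, however, you take a genuinely different route. The paper argues contrapositively and never leaves the measure algebra: if $(S,e)$ fails to be aperiodic, there is a component $c\neq 0$ and $N\in\N$ such that every component $u$ of $c$ satisfies $u=\sum_{k=1}^{N-1}q(u,k)$; from this bounded-return-time property it deduces $S^{N!}u=u$ for every component $u$ of $c$, i.e.\ $0<c\le p_{N!}$, where $p_k$ is the maximal component of $e$ all of whose components are fixed by $S^{k}$; finally, following Rudolph's Lemma 3.12, pointwise aperiodicity of $\tau$ is translated into $p_k=0$ for all $k$, giving the contradiction. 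You instead work directly: inside an arbitrary positive-measure set $C$ you manufacture a subset with first-return time at least $N$ by inducing on $C$ and applying the classical Rokhlin lemma to the induced map $\sigma_C$ (whose aperiodicity you correctly inherit from that of $\tau$). Both arguments are sound, and the trade-off is clear: your route is constructive and pinpoints exactly why the definition must quantify over subcomponents $u$ of $c$, but it imports the classical Rokhlin lemma --- a heavy tool, and somewhat against the grain of a paper whose point is to build a Riesz-space Rokhlin lemma from scratch --- together with the hypotheses that lemma requires (a Lebesgue space and invertibility of $\tau$ mod null). The paper's route needs no towers and no induced transformations, only the elementary $S^{N!}$ observation plus Rudolph's pointless characterisation of aperiodicity.

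Two caveats. First, the standardness you flag is not really an extra cost relative to the paper: the paper's step ``$\tau$ aperiodic gives $p_k=0$ for all $k$'' is precisely where a Lebesgue-space hypothesis enters its proof too (Rudolph's book works on Lebesgue spaces), so both proofs establish the theorem under the same hidden assumption; you are merely explicit about it. Second, your closing remark that the hypothesis $0<\mu(G)<1$ ``merely excludes the degenerate one-atom space'' undersells it: what it excludes is any $0$--$1$-valued measure, e.g.\ an irrational rotation equipped with the countable/co-countable $\sigma$-algebra, where $\tau$ is pointwise aperiodic while the measure algebra is $\{0,e\}$ and Riesz aperiodicity fails outright (every nonzero component is $e$ itself, and $q(e,k)=0$ for $k\ge 2$). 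Without that hypothesis the equivalence is false, not vacuous, so the hypothesis is doing genuine work.
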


\begin{proof}
Suppose that $(S,e)$ is aperiodic, i.e. for each $N \in \N$ and each component $c$ of $e$ there exists $k \ge N$ and a component $u$ of $c$ with $q(u,k) \ne 0$. 
Let $A$ denote the set of periodic points of $\tau$.
By the way of contradiction, suppose that $\mu(A)>0$.
Hence there exists $N \in \N$ such that $\mu(A_N) >0$,
 where $A_N$ is the set of points of period $N$. 
Let $c:=\chi_{A_N}$  then from the aperiodicity of $(S,v)$ there is a component $u$ of $c$ and $k>N$ so that $q(u,k)\ne 0$.  Here there is a measurable subset $B$ of $A_N$ so that $u=\chi_B$.
Here all points of $B$ are of period $N$, giving $q(u,j)=0$ for all $j\ge N$, contradicting $q(u,k)\ne 0$.

Conversely, if the set of periodic points of $\tau$ has measure zero,  we show that $(S,e)$ is aperiodic.

Developing on  \cite[Lemma 3.12]{rudolph}, we give a meaning to the set of periodic points of $\tau$ having measure zero  in a point-less setting. Let  $p_k:=\chi_{A_k}$, where
$A_k$ is the a.e. maximal measurable set which has every measurable subset invariant under $\tau^{-k}$.  In the Riesz space terminology $p_k$ is the maximal component of $e$ with $S^kv=v$ for each $v$ a component of $p_k$.  Now $\tau$ being aperiodic gives that $p_k=0$ for all $k\in\N$. 

Suppose that $(S,e)$ is not aperiodic, then 
there exist $N\in\N$ and a component $c\ne 0$ of $e$, so that, for all $k \ge N$ and components $u$ of $c$ we have $q(u,k) = 0$.  Hence
$$u=\sum_{k=1}^{N-1}q(u,k)$$
 for all $k\ge N$ and $u$ a component of $c$.
 Thus
 $$S^{N!}u=u$$
 for all components $u$ of $c$,  making $c$ a component of $p_{N!}=0$.
 Thus $0< c\le p_{N!}=0$, a contradiction.  Thus $(S,e)$ is aperiodic. 
 \end{proof}

 \begin{lemma}\label{aperiodic-lemma}
Let	$(E, T, S, e)$ be a conditionally ergodic conditional expectation preserving system with $E$ $T$-universally complete and $(S,e)$ aperiodic, then, for each $N\in\N$, there is a component $c_N$ of $e$ with $P_{Tc_N}e=e$ and $S^ic_N\wedge S^jc_N=0$ for all $i,j=0,\dots,N$ with $i\ne j$.
 \end{lemma}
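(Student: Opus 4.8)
The plan is to obtain $c_N$ as a \emph{maximal} base of an $(N{+}1)$-fold disjoint tower, organised by the support of its conditional expectation, and then to use aperiodicity to show that this maximal support must be all of $e$. Set
$$\mathcal{F}:=\{c\in C_e: S^ic\wedge S^jc=0 \text{ for all } 0\le i<j\le N\}$$
and, for $c\in\mathcal F$, write $s(c):=P_{Tc}e\in R(T)$ for the support of its base. First I would record three structural facts. (i) $\mathcal F$ contains nonzero components: applying aperiodicity with the component $e$ and the integer $N+1$ produces $k\ge N+1$ and a component $u$ of $e$ with $q(u,k)\ne 0$; by Lemma \ref{q-disjoint} (with $m=n=k$) the iterates $q(u,k),Sq(u,k),\dots,S^{k-1}q(u,k)$ are pairwise disjoint, and since $k-1\ge N$ we get $q(u,k)\in\mathcal F$. (ii) Every $c\in\mathcal F$ satisfies $c\le s(c)$: if $d:=c\wedge(e-s(c))$ then $0\le Td\le Tc$ and $0\le Td\le T(e-s(c))=e-s(c)$, while $Tc\wedge(e-s(c))=0$ by definition of the support, so $Td=0$ and strict positivity gives $d=0$. (iii) $s(c)\in R(T)=\mathcal I_S$, so by Lemma \ref{baw-12-com} $s(c)$ is $S$-invariant; combined with (ii) this yields $S^jc\le S^js(c)=s(c)$, i.e.\ every floor of the tower over $c$ lives inside the invariant band generated by $s(c)$.

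The next step is a closure property that drives an exhaustion argument. If $(c_\alpha)\subset\mathcal F$ have pairwise disjoint supports $s(c_\alpha)$, then $c^\ast:=\bigvee_\alpha c_\alpha$ exists in $E$ (the partial joins increase and are bounded by $e$, and $E$ is Dedekind complete), and $c^\ast\in\mathcal F$ with $s(c^\ast)=\bigvee_\alpha s(c_\alpha)$. Indeed, order continuity of each Riesz homomorphism $S^i$ gives $S^ic^\ast=\bigvee_\alpha S^ic_\alpha$, and since by (iii) $S^ic_\alpha\le s(c_\alpha)$ with the $s(c_\alpha)$ pairwise disjoint, the meet $S^ic^\ast\wedge S^jc^\ast$ is computed band by band and equals $\bigvee_\alpha(S^ic_\alpha\wedge S^jc_\alpha)=0$ for $i\ne j$. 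By Zorn's lemma I would choose a family $(c_\alpha)\subset\mathcal F$ with pairwise disjoint supports that is maximal for this property, and let $c^\ast$ be its join, so that $s(c^\ast)=\bigvee_\alpha s(c_\alpha)=:\sigma$.

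It remains to show $\sigma=e$, and this is where aperiodicity is used decisively. Suppose $w:=e-\sigma\ne 0$; then $w$ is a nonzero component of $e$ lying in $R(T)=\mathcal I_S$, hence $S$-invariant. Apply aperiodicity to the component $w$ with the integer $N+1$ to obtain $k\ge N+1$ and a component $u$ of $w$ with $d:=q(u,k)\ne 0$. As in (i), $d\in\mathcal F$; moreover $d\le u\le w$, so by the support computation in (ii)--(iii) we have $0\ne s(d)\le w$ and therefore $s(d)\wedge s(c_\alpha)\le w\wedge\sigma=0$ for every $\alpha$. Thus $(c_\alpha)\cup\{d\}$ is a strictly larger family in $\mathcal F$ with pairwise disjoint supports, contradicting maximality. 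Hence $\sigma=e$, and $c_N:=c^\ast$ satisfies $P_{Tc_N}e=s(c^\ast)=e$ together with $S^ic_N\wedge S^jc_N=0$ for $0\le i<j\le N$, as required.

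I expect the main obstacle to be the closure property of the second paragraph: one must justify carefully that taking the supremum of infinitely many disjoint-support bases preserves tower-disjointness, which rests on the order continuity of $S$ together with the fact (ii)--(iii) that each tower is confined to the invariant band cut out by its base's support, so that the relevant infima distribute over the disjoint band decomposition. Once this is in place, the transport of aperiodicity into the residual invariant band $w$ via Lemma \ref{q-disjoint} is the decisive, but technically short, final step.
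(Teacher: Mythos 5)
Your proof is correct, and it takes a genuinely different route from the paper's, even though both arguments rest on the same three ingredients: Lemma \ref{q-disjoint} (a component $q(u,k)$ with $k\ge N+1$ has pairwise disjoint iterates up to step $N$), Zorn's lemma, and an application of aperiodicity to a nonzero $S$-invariant remainder lying in $R(T)$. The paper runs Zorn directly on components: it sets $K_N(p)=\sum_{k=N+1}^{\infty}q(p,k)$, orders the pairs $(p,TK_N(p))$ coordinatewise in the poset $\mathfrak{G}$, takes a maximal pair, and puts $c_N=K_N(p)$, using aperiodicity to adjoin a component of $u=e-P_{TK_N(p)}e$ to the maximal $p$. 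You instead run Zorn on \emph{families} of tower bases with pairwise disjoint supports $s(c)=P_{Tc}e$, ordered by inclusion, and take the join of a maximal family; your pool $\mathcal F$ of candidate bases is in fact the same as the paper's, since $c\in\mathcal F$ exactly when $c=K_N(c)$, i.e.\ $\mathcal F=\{K_N(p):p\in C_e\}$. The structural gain of your bookkeeping is your facts (ii)--(iii): each tower is confined to the $S$-invariant band determined by its base's support, so bases with disjoint supports never interact, which makes both the closure of $\mathcal F$ under disjoint-support joins and the final maximality contradiction automatic --- the new base $q(u,k)\le w=e-\sigma$ produced by aperiodicity has support disjoint from every support already chosen and is simply adjoined. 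By contrast, the paper's contradiction step needs its maximal $p$ to satisfy $p\le e-u$, i.e.\ $p\le P_{TK_N(p)}e$, and this is asserted without proof; it can actually fail for maximal elements of $\mathfrak{G}$: the pair $(e,0)$ belongs to $\mathfrak{G}$ (since $q(e,1)=e$ forces $K_N(e)=0$) and is maximal in the stated coordinatewise order because no pair has first coordinate strictly above $e$, yet $P_{TK_N(e)}e=0\ne e$. So your disjoint-support organisation not only proves the lemma but sidesteps the one delicate (and, as written, gapped) step of the paper's own argument; what you give up is only the paper's tidy description of $c_N$ as a single $K_N(p)$, your $c_N$ being a join of the possibly infinitely many bases supplied by aperiodicity.
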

 
 \begin{proof}
 For each component $p\ne 0$ of $e$ let 
 $$K_N(p)=\sum_{k=N+1}^\infty q(p,k).$$
 Here $K_N(p)$ is a component of $p$ and, by Lemma \ref{q-disjoint}, 
 $$0=S^iK_N(p)\wedge S^jK_N(p)$$
 for all $0\le i< j\le N$. 

 Let 
 $${\frak G}:=\{(p,TK_N(p))\,|\,p \mbox{ a component of } e\}.$$
 Here $(e,0)\in {\frak G}$ so ${\frak G}$ is non-empty.
 We partially order ${\frak G}$ by
 $(p,TK_N(p))\le (\tilde{p},TK_N(\tilde{p}))$ if and only if 
 $p\le \tilde{p}$ and $TK_N(p)\le TK_N(\tilde{p})$.

 If $(p, Tk_N(p))_{p\in \Lambda}$ is a chain (totally ordered subset) in 
 ${\frak G}$, let 
 $$\hat{p}=\bigvee_{p\in \Lambda} p.$$
 Here $$\hat{p}=\lim_{p\in \Lambda} p$$
where $(p)_{p\in\Lambda}$  is an upwards directed net, directed by the partial ordering in the Riesz space.
By Lemma \ref{decomposition p}, 
$$K_N(p)=p-\sum_{k=1}^N q(p,k)$$
making $K_N(p)$ order continuous in $p$, see the definition of $q(p,k)$.
Thus 
$$TK_N(\hat{p})=\lim_{p\in\Lambda} TK_N(p).$$
Further, by the ordering on ${\frak G}$, the net $(TK_N(p))_{p\in\Lambda}$ is increasing and bounded, thus having
$$\lim_{p\in\Lambda} TK_N(p)=\bigvee_{p\in\Lambda} TK_N(p).$$
Hence we have
$$TK_N(\hat{p})=\bigvee_{p\in\Lambda} TK_N(p),$$
making $(\hat{p}, Tk_N(\hat{p}))$ an upper bound (in fact the supremum) for 
$(p, Tk_N(p))_{p\in \Lambda}$.

Thus Zorn's Lemma can be applied to 
  ${\frak G}$ to give that it has a maximal element, say 
$(p, Tk_N(p))$.

If $P_{TK_N(p)}e\ne e$, let $u=e-P_{TK_N(p)}e$, then $u$ is a non-zero component of $e$, so
 by the aperiodicity of $(S,e)$,  there exists $k>N$ and $c_k(u)$ a component of $u$ with $q(c_k(u),k)\ne 0$.  As $u\in R(T)$ we have $S^ju=u$, for all $j\in \Z$, and $p\le e-u\in R(T)$ which give $$K_N(c_k(u)+p)=K_N(c_k(u))+K_N(p)>K_N(p).$$
 Thus 
 $$(p,TK_N(p))<(p+c_k(u),TK_N(p+c_k(u))\in {\frak G},$$
 contradicting the maximality of  $(p,TK_N(p))$.  Hence $P_{TK_N(p)}=e$ and setting
 $c_N=K_N(p)$ concludes the proof.
 \end{proof}
 
The Kakutani-Rokhlin Lemma with $\epsilon$-bound can be formulated in a Riesz space as follows.

\begin{theorem}[Riesz space Kakutani-Rokhlin]\label{theo ergodic}
Let	$(E, T, S, e)$ be a conditionally ergodic conditional expectation preserving system with $E$ $T$-universally complete and $S$ surjective.  If $(S,e)$ is aperiodic, $n \in \N$ and $\epsilon >0$ then there exists a component $q$ of $e$ in $E$ with $(S^i q)_{i=0,...,n-1}$ disjoint and
\begin{equation}\label{kr-ep}
T\left(e-\bigvee\limits_{i=0}^{n-1}S^i q\right) \le \epsilon e.
\end{equation}
\end{theorem}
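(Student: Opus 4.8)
The plan is to derive the $\epsilon$-bounded decomposition by feeding a suitably ``thin'' base into the $\epsilon$-free lemma, Theorem~\ref{kakutani1}. The decisive observation is that the lower bound $(P_{Tp}e-(n-1)Tp)^+$ appearing in (\ref{april-thm}) is close to $e$ precisely when $p$ has full $T$-support (so that $P_{Tp}e=e$) while $Tp$ itself is small. Aperiodicity, through Lemma~\ref{aperiodic-lemma}, supplies exactly such a base, so the whole argument reduces to combining these two results and choosing one integer parameter correctly. We may assume $0<\epsilon<1$, since for $\epsilon\ge 1$ any tower $q$ with disjoint iterates (for instance one produced by Theorem~\ref{kakutani1}) trivially satisfies $T(e-\bigvee_{i=0}^{n-1}S^iq)\le Te=e\le\epsilon e$.

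First I would fix $N\in\N$ with $N+1\ge (n-1)/\epsilon$ and invoke Lemma~\ref{aperiodic-lemma} to obtain a component $c_N$ of $e$ with $P_{Tc_N}e=e$ and with $S^ic_N\wedge S^jc_N=0$ for all $0\le i<j\le N$. The disjointness is then converted into a smallness estimate for $Tc_N$: since the $N+1$ components $S^0c_N,\dots,S^Nc_N$ are pairwise disjoint, their join equals their sum and is again a component of $e$, so, using $TS^i=T$,
$$(N+1)Tc_N=\sum_{i=0}^N TS^ic_N=T\Big(\bigvee_{i=0}^N S^ic_N\Big)\le Te=e.$$
Hence $Tc_N\le \tfrac{1}{N+1}e$, and therefore $(n-1)Tc_N\le \tfrac{n-1}{N+1}e\le \epsilon e$ by the choice of $N$.

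Next I would apply Theorem~\ref{kakutani1} with $p:=c_N$ and the given $n$. This yields a component $q$ of $P_{Tc_N}e=e$ with $q,Sq,\dots,S^{n-1}q$ pairwise disjoint and, by (\ref{april-thm}),
$$T\Big(\bigvee_{j=0}^{n-1}S^jq\Big)\ge \big(P_{Tc_N}e-(n-1)Tc_N\big)^+=\big(e-(n-1)Tc_N\big)^+.$$
Because $0\le (n-1)Tc_N\le \epsilon e\le e$, the positive part equals $e-(n-1)Tc_N\ge (1-\epsilon)e$. Taking complements,
$$T\Big(e-\bigvee_{j=0}^{n-1}S^jq\Big)=e-T\Big(\bigvee_{j=0}^{n-1}S^jq\Big)\le e-(1-\epsilon)e=\epsilon e,$$
which is (\ref{kr-ep}).

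Given the two input results this combination is short, and the only genuine content is the translation of the disjointness of the iterates of $c_N$ into the mass bound $Tc_N\le\tfrac{1}{N+1}e$. I do not expect a serious obstacle in the present theorem: the real difficulty has been front-loaded into Lemma~\ref{aperiodic-lemma}, whose Zorn's-lemma construction of a full-support base with arbitrarily small mass is where aperiodicity is actually exploited. The only points here that demand a little care are the handling of the positive part and the boundary case $\epsilon\ge 1$, both of which are dispatched by the reductions above.
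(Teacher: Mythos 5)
Your proof is correct, and its skeleton coincides with the paper's: invoke Lemma \ref{aperiodic-lemma} to produce a base $c_N$ with full $T$-support and pairwise disjoint iterates, show $Tc_N$ is small, then feed $p:=c_N$ into Theorem \ref{kakutani1} and take complements. The one genuine difference is how the smallness of $Tc_N$ is obtained. The paper routes this through the first recurrence time: the disjointness of $c_N, Sc_N,\dots,S^Nc_N$ forces $q(c_N,k)=0$ for $k=1,\dots,N$, hence $n(c_N)\ge Nc_N$, and Kac's theorem (Theorem \ref{kac-thm}) then gives $NTc_N\le Tn(c_N)=P_{Tc_N}e=e$. You instead derive $(N+1)Tc_N=T\left(\bigvee_{i=0}^{N}S^ic_N\right)\le Te=e$ directly from disjointness (sum equals join for disjoint positive elements) together with $TS^i=T$ and linearity of $T$. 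Your route is more elementary and even marginally sharper ($1/(N+1)$ in place of $1/N$, which is immaterial); it makes no appeal at this point to the recurrence time $n(p)$ or to Kac's formula (Kac is of course still used inside Theorem \ref{kakutani1}, so neither proof is independent of it). The paper's route, by contrast, makes explicit the dynamical meaning of the thin base --- a base whose iterates are disjoint up to time $N$ has first recurrence time at least $N$ --- in keeping with its theme of deriving the decomposition from the Kac formula. Your extra care with the positive part and the case $\epsilon\ge 1$ is fine but not strictly needed: since $(x)^+\ge x$, the paper's chain $T\left(\bigvee_{j=0}^{n-1}S^jq\right)\ge P_{Tp}e-(n-1)Tp\ge e-\epsilon e$ already yields the conclusion for every $\epsilon>0$.
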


\begin{proof}
Let $n \in \N$ and $\epsilon >0$.  Take $N >\frac{n-1}{\epsilon}$.  By Lemma \ref{aperiodic-lemma}, there exists a component $c_N$ of $e$ with $P_{Tc_N}e=e$ and $S^ic_N\wedge S^jc_N=0$ for all $i,j=0,\dots,N$ with $i\ne j$. Let $p:=c_N$. Then $q(p,k)=0$ for all $k=1,\dots,N$ giving that
\begin{equation}\label{19-june}
Np \le n(p).
\end{equation} 
By Theorem \ref{kac-thm} we have
\begin{equation}\label{baw-12-1}
e=P_{Tp}e=Tn(p).
\end{equation}
Combining (\ref{19-june}) and (\ref{baw-12-1}), we get 
\begin{equation}\label{baw-12-2-1}
NTp \le Tn(p)=e.
\end{equation}
Since $N>\frac{n-1}{\epsilon}$, (\ref{baw-12-2-1}) yields
\begin{equation}\label{baw-12-2}
Tp \le \frac{\epsilon}{n-1}e.
\end{equation}

By Theorem \ref{kakutani1}, there exists a component  $q$ of $P_{Tp}e=e$ such that $q, Sq , ...,S^{n-1}q$ are pairwise disjoint and 
\begin{equation}\label{baw-12-3}
T\left(\bigvee\limits_{j=0}^{n-1}S^j q\right) \ge P_{Tp}e - (n-1) Tp \ge e- \epsilon e
\end{equation}
which gives the inequality of the theorem.
\end{proof}

 On reading the works of Rokhlin, it appears that the requirement of conditional ergodicity is redundant and only aperiodicity is needed.  As we know, that every CEPS is conditionally ergodic with respect to $L_S$. So in our case conditional ergodicity can be dispensed with, but we need to be careful to use the conditional expectation operator $L_S$  and work in the $L_S$-universal completion of $E$,  which we will denote by $\hat{E}$.

\begin{corollary} \label{cor aperiodic}
Let	$(E, T, S, e)$ be a conditional expectation preserving system with $E$ $T$-universally complete and $S$ surjective, then 	$(\hat{E}, L_S, S, e)$ is a conditionally ergodic conditional expectation preserving system. If $v$ is a component of $e$ in $R(L_S)$ with $(S,v)$ aperiodic, $n \in \N$ and $\epsilon >0$ then there exists a component $q$ of $v$ in $E$ with $(S^i q)_{i=0,...,n-1}$ disjoint and 
	$$L_S\left(v-\bigvee\limits_{i=0}^{n-1}S^i q\right) \le \epsilon v.$$
\end{corollary}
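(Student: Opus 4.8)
The plan is to treat the first assertion as essentially definitional and then reduce the quantitative statement to Theorem \ref{theo ergodic} by localising the dynamics to the band generated by $v$.

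First I would verify that $(\hat E, L_S, S, e)$ is a conditionally ergodic CEPS. The operator $L_S$ of (\ref{cesaro}) is a positive order continuous projection with $L_S e = e$; it is strictly positive because $TL_S = T$ (apply the order continuous $T$ to the Ces\`aro averages, using $TS^k = T$) together with the strict positivity of $T$, and its range is Dedekind complete, so $L_S$ is a conditional expectation on $\hat E = L^1(L_S)$, which is $L_S$-universally complete by construction. A short index shift gives $L_S S = S L_S = L_S$, so $(\hat E, L_S, S, e)$ is a CEPS, and since the range of the Ces\`aro projection is exactly the invariant set, $R(L_S) = {\cal I}_S$, making the system conditionally ergodic.

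Next I would pass to the band $B_v$ in $\hat E$ generated by $v$, with band projection $P_v$. Because $v \in R(L_S) = {\cal I}_S$ we have $Sv = v$, and both $S$ and $L_S$ commute with $P_v$: for $S$ this is because $S$ is a Riesz homomorphism fixing $v$, and for $L_S$ it follows from the averaging identity $L_S(vf) = v L_S f$ together with $P_v f = vf$ in the $f$-algebra. Hence $(B_v, L_S|_{B_v}, S|_{B_v}, v)$ is again a conditionally ergodic CEPS with weak order unit $v$, inheriting $L_S$-universal completeness from $\hat E$ since $B_v$ is a projection band. I would also check that $S$ restricts to a surjection of $B_v$: $S$ is injective on $E$ (from $TS=T$ and strict positivity) and surjective by hypothesis, hence a Riesz isomorphism whose inverse again satisfies $L_S S^{-1} = S^{-1} L_S = L_S$; this carries surjectivity first to $\hat E$ and then, via $P_v S = S P_v$, to $B_v$. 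Finally, aperiodicity of $(S,v)$ is precisely aperiodicity of the unit in the system $(B_v, L_S, S, v)$.

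With these reductions in place, Theorem \ref{theo ergodic} applied to $(B_v, L_S, S, v)$ delivers, for the given $n$ and $\epsilon$, a component $q$ of $v$ with $(S^i q)_{i=0,\dots,n-1}$ disjoint and $L_S\!\left(v - \bigvee_{i=0}^{n-1} S^i q\right) \le \epsilon v$, the claimed inequality; and since $0 \le q \le v \le e$ with $e \in E$ Dedekind complete, $q$ lies in $E$ as required. I expect the main obstacle to be this band localisation rather than the formal conditional ergodicity: one must ensure that restriction to $B_v$ preserves \emph{every} hypothesis of Theorem \ref{theo ergodic} at once — that $L_S|_{B_v}$ is a genuine conditional expectation, that $L_S$-universal completeness descends to the projection band, and above all that surjectivity of $S$ survives the restriction. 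The commutation of $L_S$ with $P_v$, resting on $v \in R(L_S)$ and the averaging identity, is the linchpin that makes all of these checks go through.
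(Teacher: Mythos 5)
Your proposal is correct and follows essentially the same route the paper intends: the paper gives no explicit proof, presenting the corollary as the direct consequence of applying Theorem \ref{theo ergodic} to the conditionally ergodic system $(\hat{E}, L_S, S, e)$, localised to the part of the space determined by $v$, which is precisely the reduction you carry out (with the details of strict positivity of $L_S$, the commutation $SP_v=P_vS$ and $L_SP_v=P_vL_S$ via $v\in R(L_S)$, and transfer of surjectivity and aperiodicity filled in). The only cosmetic quibble is your final justification that $q\in E$: the operative reason is that $E=L^1(T)$ is an ideal in $E^u$ containing $e$, not Dedekind completeness per se.
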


Theorem \ref{theo ergodic} is the specific case of  Corollary \ref{cor aperiodic} where  $(E, T, S, e)$ is a conditionally ergodic and $E$ is $T$-universally complete, as then $L_S=T$. 

\begin{example} Continuing on Example \ref{remark-2025-1}, 
 let $n \in \N$ and $A\in {\cal A}$ be so that $L_S\chi_A = \chi_A$.
 If $\epsilon>0$ and in addition $\tau$ is an a.e. aperiodic map on $A$,  then
 Corollary \ref{cor aperiodic} gives that
there exists  $B\in {\cal A}$ with $B\subset A$ such that 
$B,  \tau^{-1}(B), \dots, \tau^{1-n}(B)$ are a.e.  pairwise disjoint and 
\begin{equation}\label{april-thm-2025}
0\le L_S(\chi_A-\chi_C) \le \epsilon\chi_A,
\end{equation}
where
$C=\bigcup_{j=0}^{n-1} \tau^{-j}B$.
\end{example}

To highlight the need for aperiodicity, we now give an example of a conditionally ergodic preserving system $(E,T,S,e)$ which is $T$-universally complete and is neither periodic nor aperiodic and for which the $\epsilon$ approximation of Theorem  
\ref{theo ergodic} fails.
 
\begin{example} 
Let $E_n=\ell(n)$, the space of real finite sequences of length $n$ with componentwise ordering. On $E_n$ we introduce the conditional expectation 
$$T_n(f_n)(i)=\frac{1}{n}\sum_{j=1}^n f_n(j)\mathbf{1}_n,  \, f_n\in E_n,$$ 
where $\mathbf{1}_n(j)=1$ for all $j=1,\dots, n$.  
Further $\mathbf{1}_n$ is a weak order unit for $E_n$.  
On each $E_n$ we take $S_n$ as the Riesz homomorphism given by  $S_1(f_1)=f_1$, and for $n\ge 2$, 
$$S_nf_n(j)=\left\{\begin{array}{ll}f_n((j-1)), &j=2,\dots,n\\  f_n(n),&j=1\end{array}\right..$$
Clearly $(E_n,T_n,S_n,\mathbf{1}_n)$ is a $T_n$-universally complete ergodic conditional expectation preserving system.
We take $(E,T,S,\mathbf{1})$ as the direct product of the spaces $(E_n,T_n,S_n,\mathbf{1}_n), n\in\N$. Now $$E=\prod_{n=1}^\infty E_n, \quad T=\prod_{n=1}^\infty T_n,\quad S=\prod_{n=1}^\infty S_n, \quad \mathbf{1}=(\mathbf{1}_1,\mathbf{1}_2,\dots). $$
The resulting space $(E, T, S, e)$ is a conditionally ergodic conditional expectation preserving system with $E$ $T$-universally complete.  Further $(S,e)$ is neither periodic nor aperiodic.
If we take $n\in\N\setminus\{1\}$ and $0<\epsilon<1/n$ in Theorem \ref{theo ergodic} and assume that there exists component $p$ of $\mathbf{1}$ exhibiting the required properties of the theorem, then the disjointness of $S^jp$ for $j=0,\dots,n$ gives that the components $p_j=0$ for $j\le n$, but then $S_j^kp_j=0$ for all $k$ and so (\ref{kr-ep}) fails.
\end{example}


 \section{Approximation of aperiodic maps}

 Rokhlin proved an interesting consequence of his lemma stating that any aperiodic transformation $\tau$ can be approximated by periodic ones, see \cite[page 75]{halmos2017lectures} and \cite{weiss1989work}. 
 That is, for any positive integer $n$ and any $\epsilon >0$, there exists a periodic transformation $\tau'$ of period $n$ such that $d(\tau, \tau' ) \le \frac{1}{n}+\epsilon$ where $d(\tau, \tau' )=\mu \{x : \tau x \ne \tau' x\}$.

 In this section we apply Theorem \ref{theo ergodic}  to obtain an approximation of aperiodic conditional expectation preserving transformations by periodic ones in the conditional Riesz space setting.
 
  \begin{theorem}
 Let $(E,T, S, e)$ be a $T$-universally complete  conditionally ergodic preserving system where $S$ is a surjective Riesz homomorphism and $(S,e)$ aperiodic. 
For each $1>\epsilon >0$, there  exists a surjective Riesz homomorphism $S'$ such that  $(E,T, S', e)$ is a conditional expectation preserving system with $(S',e)$ periodic  and  
 \begin{equation}\label{eg-bound}
 \sup_{u\in C_e} T |(S-S')u| \le \epsilon e.
 \end{equation}
 \end{theorem}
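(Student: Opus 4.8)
The plan is to reduce the statement to the $\epsilon$-bounded Kakutani--Rokhlin decomposition of Theorem \ref{theo ergodic} and then to build $S'$ by ``rotating'' the resulting Rokhlin tower. First I would fix $n\in\N$ with $2/n\le \epsilon/2$ and apply Theorem \ref{theo ergodic} with tower height $n$ and tolerance $\epsilon'=\epsilon/4$, obtaining a component $q$ of $e$ with $q,Sq,\dots,S^{n-1}q$ pairwise disjoint and $T(e-W)\le \epsilon' e$, where $W:=\bigvee_{i=0}^{n-1}S^iq$. Writing $L_i:=S^iq$ and $U:=e-W$, the elements $L_0,\dots,L_{n-1},U$ are disjoint components summing to $e$, so their bands decompose $E$. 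Since $T$ is strictly positive and $TS=T$, the Riesz homomorphism $S$ is injective (if $Sf=0$ then $T|f|=TS|f|=T|Sf|=0$, so $f=0$), hence bijective, and $S^{-1}$ is again an order continuous Riesz homomorphism with $TS^{-1}=T$ and $S^{-1}e=e$. I would then define the candidate
\[
S':=P_{U}+\Big(\sum_{i=1}^{n-1}P_{L_i}\Big)S+P_{q}S^{1-n},
\]
which mimics the classical construction: $S'$ acts as $S$ on the lower levels, wraps the top level back to the base through $S^{1-n}=(S^{-1})^{n-1}$, and is the identity off the tower.

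The verification that $S'$ is an admissible dynamics rests on the band-intertwining identity $SP_c=P_{Sc}S$ for components $c$ of $e$ (valid by order continuity of $S$ and $S(f^+\wedge nc)=Sf^+\wedge nSc$), together with its inverse form $P_cS=SP_{S^{-1}c}$. Because the three summands of $S'$ map $E$ into the pairwise disjoint bands generated by $q$, by the $L_i$, and by $U$, and each summand is a band projection composed with a Riesz homomorphism, a standard pasting argument shows $S'$ is a Riesz homomorphism with $S'e=e$. Using the intertwining relations I would compute the action of $S'$ on each band and find that it shifts the band generated by $L_i$ onto that generated by $L_{i+1}$ (via $S$) for $i\le n-2$, sends the band generated by $L_{n-1}$ onto that generated by $q$ (via $S^{1-n}$), and fixes the band generated by $U$; here the tower disjointness $L_0\wedge L_{n-1}=0$ is exactly what makes the unwanted cross terms vanish. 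Composing once around the cycle gives $(S')^{n}=I$, so $S'$ is bijective (hence surjective) and $(S',e)$ is periodic with period dividing $n$; order continuity then follows from the Remark once $TS'=T$ is established.

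The main obstacle is to show that the surgery preserves the conditional expectation, i.e.\ $TS'=T$, which is the remaining CEPS requirement and is not evident a priori. For this I would write
\[
S-S'=P_{U}(S-I)+P_{q}(S-S^{1-n}),
\]
apply $T$, and telescope. Expanding $TP_U(S-I)f$ through $P_U=I-\sum_{i=0}^{n-1}P_{L_i}$ and using $P_{L_i}S=SP_{L_{i-1}}$ (with $L_{-1}:=S^{-1}q$) together with $TS=T$ collapses the sum to $TP_{L_{n-1}}f-TP_{S^{-1}q}f$; similarly, moving $S$ and $S^{1-n}$ across the projections and invoking $TS^{k}=T$ gives $TP_q(S-S^{1-n})f=TP_{S^{-1}q}f-TP_{L_{n-1}}f$. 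The two contributions cancel, so $T(S-S')=0$. This telescoping, driven entirely by the intertwining relations and the invariance $TS^{k}=T$, is the crux of the argument.

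Finally, for the estimate I would exploit that $S-S'$ is supported on the two disjoint bands generated by $U$ and by $q$. For a component $u$ of $e$ one has $0\le Su,\,S^{1-n}u,\,u\le e$, so $|(S-S')u|=P_U|Su-u|+P_q|Su-S^{1-n}u|\le 2U+2q$, whence $T|(S-S')u|\le 2T(e-W)+2Tq$ uniformly in $u$. Since the $n$ levels $S^iq$ are disjoint with $TS^iq=Tq$, we get $nTq\le Te=e$, so $Tq\le \tfrac1n e$; combined with $T(e-W)\le \epsilon' e$ this yields $\sup_{u\in C_e}T|(S-S')u|\le (2\epsilon'+2/n)e\le \epsilon e$ for the chosen $n$ and $\epsilon'$, completing the proof.
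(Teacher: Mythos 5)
Your proposal is correct and takes essentially the same route as the paper: the same application of Theorem \ref{theo ergodic} with the same parameters, and your operator is literally the paper's $S'=SP_{q}+S^{1-n}P_{S^{n-1}p}+P_{e-h}$ rewritten with the band projections moved to the left of the homomorphisms via the intertwining identity $SP_c=P_{Sc}S$, with the same splitting $S-S'=P_U(S-I)+P_q(S-S^{1-n})$ driving the same final estimate. The only differences are bookkeeping: the paper gets $TS'=T$ in one line (with projections on the right no telescoping is needed), and it proves surjectivity by exhibiting preimages and periodicity by an inductive recurrence computation, whereas your observation that $(S')^n=I$ yields bijectivity and periodicity in a single stroke --- a minor streamlining of the same construction rather than a different method.
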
 
 
\begin{proof}
Let $\epsilon >0$ and $n>4/\epsilon$.
By Theorem \ref{theo ergodic} there is a component $p$ of $e$ such that $(S^i p)_{i=0,...,n-1}$ are disjoint and 
 \begin{equation}\label{eg-1}
 T\left(e-h\right) \le \frac{\epsilon}{4} e
 \end{equation}
where
 \begin{equation}\label{eg-2}
h=\sum_{i=0}^{n-1} S^ip=\bigvee_{i=0}^{n-1} S^ip\in C_e.
\end{equation}
Hence 
 \begin{equation}\label{eg-12-2}
\left(1-\frac{\epsilon}{4}\right)e \le Th.
 \end{equation}

Further, applying $T$ to (\ref{eg-2}) gives
 \begin{equation}\label{eg-3}
nTp=Th\le Te=e,
\end{equation}
 so 
 \begin{equation}\label{eg-4}
Tp\le \frac{1}{n}e\le\frac{\epsilon}{4}e.
\end{equation}

We now give a decomposition of $e$ which will be the basis for the decomposition of $E$ into bands. 
 Let 
\begin{equation}\label{eg-12}
q=\sum_{i=0}^{n-2} S^ip=\bigvee_{i=0}^{n-2} S^ip\in C_e.
\end{equation}
 Here 
\begin{equation}\label{app-1}
h=S^{n-1}p+q=(S^{n-1}p)\vee q.
\end{equation} 
Hence we have the following disjoint decomposition of $e$ by its components $q, S^{n-1}p, e-h$, 
\begin{equation}\label{app-3}
q+({S^{n-1}p})+(e-h)=e.
\end{equation}

We define the approximation Riesz to $S$ as 
\begin{equation}\label{eg-app}
S'=SP_{q}+S^{1-n}P_{S^{n-1}p}+P_{e-h}.
\end{equation}
$S'$ is a finite sum of order continuous maps and is thus order continuous.

We begin by verifying that $(E,T,S',e)$ is a conditional expectation preserving system with $S'$ surjective.
 $S'$ is a sum of compositions of Riesz homomorphisms and is thus a Riesz homomorphism. 
From (\ref{app-3})  and (\ref{eg-app}) we get
$$S'e=Sq+p+(e-h)=e.$$

From (\ref{eg-app}) and (\ref{app-3}), as $TS=T$,
$$TS'= TSP_{q}+TS^{1-n}P_{S^{n-1}p}+TP_{e-h}=T(P_{e-h+q+S^{n-1}p})=T.$$
 As $T$ is strictly positive, the condition $TS'=T$ ensures that $S'$ is injective.  
 
 We now prove that 
 $S'$ is surjective.
 In particular, for $f\in E$ set
 $$\hat{f}=P_{e-h}f+P_{q}S^{-1}f+P_{S^{n-1}p}S^{n-1}f.$$
 We show that $S'\hat{f}=f$.
 To see this 
 \begin{eqnarray*}
 S'\hat{f}&=&( SP_{q}+S^{1-n}P_{S^{n-1}p}+P_{e-h})
 (P_{e-h}f+P_{q}S^{-1}f+P_{S^{n-1}p}S^{n-1}f)\\
 &=&P_{e-h}f+SP_{q}S^{-1}f+S^{1-n}P_{S^{n-1}p}S^{n-1}f.
 \end{eqnarray*}
 Here $$SP_qS^{-1}f=SP_{S^{-1}Sq}S^{-1}f=SS^{-1}P_{Sq}f=P_{Sq}f$$ 
 and
 $$S^{1-n}P_{S^{n-1}p}S^{n-1}f=S^{1-n}S^{n-1}P_{p}f=P_pf.$$
 Thus, by (\ref{eg-app}),
 \begin{eqnarray*}
 S'\hat{f}&=&P_{e-h}f+P_{Sq}f+P_{p}f=P_{(e-h)+Sq+p}f
 \end{eqnarray*}
 giving
 $S'\hat{f}=f$, showing that $S'$ is surjective.

We now show that $(S',e)$ is periodic.
  It suffices to show that, for each $u\in C_e$ with $u\ne 0$,  we have that 
$$\bigvee_{k=1}^{n} (S')^k u \ge u.$$
Let  $u\in C_e$ with $u\ne 0$.
Here
\begin{equation}\label{periodic-12-w-h}
S'u\ge S'(u\wedge (e-h))=u\wedge (e-h). 
\end{equation}

Let $0\le j\le n-1$ and $0\le i\le n-1-j.$  
We show inductively with respect to $i$ that
\begin{equation}\label{periodic-12-ind-1}
(S')^i(u\wedge S^jp)=S^i(u\wedge S^jp)\le S^{i+j}p\le q.
\end{equation}
For $i=0$,
\begin{equation}\label{periodic-12-ind-2}
(S')^0(u\wedge S^jp)=u\wedge S^jp=S^0(u\wedge S^jp)\le S^jp\le q.
\end{equation}
Now suppose
$0\le i\le n-1-j-1$ and that
\begin{equation}\label{periodic-12-ind-3}
(S')^i(u\wedge S^jp)=S^i(u\wedge S^jp)\le S^{i+j}p\le q.
\end{equation}
then applying $S'$ to (\ref{periodic-12-ind-3}),  from the definition of $S'$, we get
\begin{equation*}
S'(S')^{i}(u\wedge S^jp)=S(S')^{i}(u\wedge S^jp)=S^{i+1}(u\wedge S^jp)\le S^{i+j+1}p\le q,
\end{equation*}
from which (\ref{periodic-12-ind-1}) holds by induction.

In particular, for $i=n-j-1$,
\begin{equation}\label{periodic-12-app-S'-n-1}
(S')^{n-1-j}(u\wedge S^jp)=S^{n-1-j}(u\wedge S^jp)\le S^{n-1}p.
\end{equation}
Now applying $S'$ to the above gives
\begin{equation}\label{periodic-12-app-S'-n}
(S')^{n-j}(u\wedge S^jp)=S^{1-n}S^{n-1-j}(u\wedge S^jp)=S^{-j}(u\wedge S^jp)\le p=S^0p.
\end{equation}

Hence (\ref{periodic-12-app-S'-n}) can be written as
\begin{equation}\label{periodic-12-app-S'-*}
(S')^{n-j}(u\wedge S^jp)=(S^{-j}u)\wedge S^0p.
\end{equation}
So by (\ref{periodic-12-app-S'-*}) 
we have
$$(S')^n(u\wedge S^jp)=(S')^j(S')^{n-j}(u\wedge S^jp)=(S')^j((S^{-j}u)\wedge S^0p),$$
and now applying (\ref{periodic-12-ind-1}) (with
replacing $j$ by $0$, $i$ by $j$ and $u$ by $S^{-j}u$ ), we have
\begin{equation}\label{periodic-12-app-+}
(S')^n(u\wedge S^jp)=(S')^j((S^{-j}u)\wedge S^0p)=S^j((S^{-j}u)\wedge S^0p)=u\wedge S^jp.
\end{equation}
Taking the supremum of (\ref{periodic-12-app-+}) over $j=0,\dots,n-1$ gives
\begin{equation}\label{periodic-12-app-sup}
(S')^nu\ge (S')^n(u\wedge h)=\bigvee_{j=0}^{n-1}(S')^n(u\wedge S^jp)=u\wedge \left(\bigvee_{j=0}^{n-1}S^jp\right)=u\wedge h.
\end{equation}
Combining (\ref{periodic-12-w-h}) and (\ref{periodic-12-app-sup}) gives
$$(S')^nu \vee S'u\ge (u\wedge h)\vee (u\wedge (e-h))=u\wedge e=u,$$
showing that $(S',e)$ is periodic.

Finally
we  show that $S'$ obeys the bound (\ref{eg-bound}).
 For $u\in {\cal C}_e$ we have
 $$(S-S')u=(S-S^{1-n})P_{S^{n-1}p}u+(S-I)P_{e-h}u.$$
Here, as $TS^j=T, j\ge 0,$ by (\ref{eg-4}),
$$T|(S-S^{1-n})P_{S^{n-1}p}u|\le TS^np+Tp= 2Tp\le \frac{\epsilon}{2} e$$
 and, by (\ref{eg-1}),
 $$T|(S-I)P_{e-h}u|\le TS(e-h)+T(e-h)=2T(e-h)\le \frac{\epsilon}{2}e,$$
 giving
 $$T |(S-S')u| \le T|(S-S^{1-n})P_{S^{n-1}p}u|+T|(S-I)P_{e-h}u|\le  \epsilon e.$$
Thus (\ref{eg-bound}) holds.
 \end{proof}


\begin{Backmatter}

\begin{ack}
This research was funded in part by the joint South Africa - Tunisia Grant (South African National Research Foundation Grant Number SATN180717350298, grant number 120112. 
\\

\noindent
Competing interests: The authors declare none.
\end{ack}

\end{Backmatter}

\end{document}